\theoremstyle{plain}
\newtheorem{thm}{Theorem}[section]
\newtheorem{prop}{Proposition}[section]
\newtheorem{lem}[prop]{Lemma}
\newtheorem{defi}[prop]{Definition}
\newtheorem{rmk}[prop]{Remark}
\newtheorem*{proposition*}{Proposition}
\numberwithin{equation}{section}
\newcommand {\p} {\partial}
\title[Inverse Problems for Mean Field Games]{Inverse Problems for Mean Field Games}
\author[H. Liu]{Hongyu Liu}
\address{Department of Mathematics, City University of Hong Kong, Kowloon, Hong Kong SAR, China}
\email{hongyu.liuip@gmail.com, hongyliu@cityu.edu.hk}
\author[C. Mou]{Chenchen Mou}
\address{Department of Mathematics, City University of Hong Kong, Kowloon, Hong Kong SAR, China}
\email{chencmou@cityu.edu.hk}
\author[S. Zhang]{Shen Zhang}
\address{Department of Mathematics, City University of Hong Kong, Kowloon, Hong Kong SAR, China}
\email{szhang347-c@my.cityu.edu.hk}
\begin{document}
	\maketitle
	
	\begin{abstract}
		
		The theory of mean field games studies the limiting behaviors of large systems where the agents interact with each other in a certain symmetric way. The running and terminal costs are critical for the agents to decide the strategies. However, in practice they are often partially known or totally unknown for the agents, while the total cost is known at the end of the game. To address this challenging issue,  we propose and study several inverse problems for mean field games. When the Lagrangian is a kinetic energy, we first establish unique identifiability results, showing that one can recover either the running cost or the terminal cost from knowledge of the total cost. If the running cost is limited to the time-independent class, we can further prove that one can simultaneously recover both the running and the terminal costs. Finally, we extend the results to the setup with general Lagrangians.
	\end{abstract}

	\tableofcontents
	\section{Introduction}
	The theory of mean field games (MFGs) was introduced and studied by Caines-Huang-Malham\'e \cite{HCM06,HCM071,HCM072,HCM073} and Lasry-Lions \cite{LL06a, LL06b, LL07a, Lions} independently in 2006.  The MFG theory has rapidly developed into one of the most significant tools towards the study of the Nash equilibrium behavior of large systems. Such problems consider limit behavior of large systems where the homogeneous strategic players interact with each other in a certain symmetric way. More precisely, each player acts according to his/her optimization problem taking into account other players' decisions. Since their population is large, we can assume the number of players goes to infinity and hence a representative player exists. They have a wide variety of applications, including economics \cite{AchdouHanLasryLionsMoll}, engineering \cite{HCM06}, finance \cite{LackerZari}, social science \cite{Gelfand} and many others. We refer to Lions \cite{Lions}, Cardaliaguet \cite{Cardaliaguet} and Bensoussan-Frehse-Yam \cite{BFY} for introductions of the subject in its early stage and Carmona-Delarue \cite{CarDel-I, CarDel-II} and Cardaliaguet-Porretta \cite{CarPor} for comprehensive accounts on the state-of-the-art developments in the literature. 
	
	\smallskip
	
	We first briefly introduce the mathematical setup of our study and shall supplement more details in Section 2. In its typical formulation, an MFG can be described as follows. Let $n\in\mathbb{N}$ and the quotient space $\mathbb{T}^n:=\mathbb{R}^n\backslash\ \mathbb{Z}^n$ be the $n$-dimensional torus, which signifies a state space. Given $x\in\mathbb T^n$ and the flow of probability measures $\{\rho_t\}_{t\in [0,T]}$ on $\mathbb T^n$ with $\rho_0=m_0$, one aims at minimizing the cost functional over all the admissible closed-loop controls:
	\begin{equation}\label{eq:mfgproblem}
		J(x;\{\rho_t\}_{t\in[0,T]},\alpha)=\inf_{\alpha}\mathbb E\left\{\int_0^T L(X_t^{x,\alpha},\alpha(t, X_t^{x,\alpha}))+F(X_t^{x,\alpha},\rho_t)dt+G(X_T,\rho_T)\right\},
	\end{equation}
	such that
	\begin{equation}\label{eq:mfgconstrain}
		X_t^{x,\alpha}=x+\int_0^t\alpha(s,X_s^{x,\alpha})ds+\sqrt{2} B_s+\mathbb Z^n\quad\text{on $[0,T]$,}
	\end{equation}
	where $L:\mathbb T^n\times\mathbb R^n\to\mathbb R$ is a Lagrangian, $F:\mathbb T^n\times\mathcal{P}(\mathbb T^n)\to\mathbb R$ is a running cost and $G:\mathbb T^n\times\mathcal{P}(\mathbb T^n)\to\mathbb R$ is a terminal cost. We call $(\alpha^*,\{\rho_{t}^*\}_{t\in[0,T]})$ a mean field equilibrium if
	\[
	\rho_0^*=m_0\quad\text{and}\quad\alpha^*:=\arg\min_{\alpha}J(x;\{\rho_t^*\}_{t\in[0,T]},\alpha),
	\]
	and the law of $X_t^{\xi,\alpha^*}$ on $\mathbb T^n$ is $\rho_t^*$ where 
	\begin{equation}\label{eq:mfgpopulation}
		X_t^{\xi,\alpha^*}=\xi_0+\int_0^{t}\alpha^*(s,X_s^{x,\alpha^*})ds+\sqrt{2}B_t+\mathbb Z^n\quad \text{on [0,T]},
	\end{equation}
	and its initial status $\xi_0$ is a random variable with the law $m_0$ on $\mathbb T^n$. The mean field equilibrium can be characterized by the following MFG system:
	\begin{equation}\label{eq:mfg}
		\left\{
		\begin{array}{ll}
			-\partial_t u(x,t) -\Delta u(t,x)+ H\big(x,\nabla u(x,t)\big)-F(x,t,m(x,t))=0,&  {\rm{in}}\ \mathbb T^n\times (0,T),\medskip\\
			\partial_tm(x,t)-\Delta m(x,t)-{\rm div} \big(m(x,t) \nabla_pH(x, \nabla u(x,t)\big)=0, & {\rm{in}}\ \mathbb T^n\times(0,T),\medskip\\
			u(x,T)=G(x,m(x,T)),\ m(x,0)=m_0(x), & {\rm{in}}\ \mathbb T^n.
		\end{array}
		\right.
	\end{equation}
	In \eqref{eq:mfg}, $\Delta$ and $\rm{div}$ are the Laplacian and divergent operators with respect to the $x$-variable, respectively. The Hamiltonian $H$ is the Legendre-Fenchel transform of the Lagrangian $L$ in \eqref{eq:mfgproblem}. Here, $H(x, \nabla u)=H(x, p)$ with $(x, p):=(x, \nabla u)\in \mathbb{T}^n\times\mathbb{R}^{n}$ being the canonical coordinates. In the physical setup, $u$ is the value function of each player; $m$ signifies the population distribution; $F$ is the running cost function which signifies the interaction between the agents and the population; $m_0$ represents the initial population distribution and $G$ signifies the terminal cost. All the functions involved are real valued and periodically extended from $\mathbb{T}^n$ to $\mathbb{R}^n$, which means that we are mainly concerned with periodic boundary conditions for the MFG system \eqref{eq:mfg}.  In particular, we note that $m(\cdot, t)$ is required to be a probability measure for any $t\in [0, T]$. That is, it is required that for any given $t\in [0, T]$:
		\begin{equation}\label{eq:pm1}
			m(x, t)=m_t(x)\in\mathcal{O}_a:=\{\mathfrak{m}(x):\mathbb{T}^n\to[0,\infty)\,\Big|\,\int_{\mathbb{T}^n}\,\mathfrak{m}\,dx=a\leq1 \}.
		\end{equation}
		Here, we point out that by applying the divergence theorem to the second equation in \eqref{eq:mfg}, one can directly verify that if $\int_{\mathbb{T}^n} m_0(x)\, dx=a$, then $\int_{\mathbb{T}^n} m(x, t)\, dx=a$ for any subsequent $t\in (0, T]$. However, the non-negativity of $m_0$ and $m$ should be imposed in order to guarantee that they are probability measures. In principle, one would also need to require that $a=1$ which signifies that the game agents are confined within a given domain.
		Throughout the current study, we consider a specific scenario that the MFG domain consists of a family of disjoint subdomains, say $\Sigma_j$, $j\in\mathbb{N}$, such that the overall population on $\cup_j \Sigma_j$ is 1, namely $\int_{\cup_j \Sigma_j} m=1$. Though those subdomains are disjoint, the agents within each subdomain can interact with those in other subdomains, say e.g. via internet. Hence if $\mathbb{T}^n$ is taken to be any one of those subdomains, i.e. $\Sigma_j$, it is not necessary to require that $\int_{\mathbb{T}^n} m=1$. That is, $a$ in \eqref{eq:pm1} can be any number in $[0, 1]$, as long as $m$ is required to be nonnegative. This technical relaxation is crucial in our subsequent study but practically unobjectionable. In this setup, the mean field strategy can be formally represented by $\alpha^*=-\nabla_pH(x,\nabla u(x,t))$. In Section~2 in what follows, we shall supplement more background introduction on the MFG system.

	\smallskip
	
	The well-posedness of the MFG system \eqref{eq:mfg} is well-understood in various settings. The first results date back to the original works of Lasry and Lions and have been presented in Lions \cite{Lions} and see also Caines-Huang-Malhame \cite{HCM06}. Many progresses have been made afterwards. Regarding $F$ and $G$, one can consider both non-local and local dependences on the measure $m$. The well-posedness of the MFG system \eqref{eq:mfg} is known in Cardaliaguet \cite{Cardaliaguet}, Cardaliaguet-Porretta \cite{CarPor}, Carmona-Delarue \cite{CarDel-I}, Meszaros-Mou \cite{MM} in the case of nonlocal data $F$ and $G$; and Ambrose\cite{Amb:18, Amb:21}, Cardaliaguet\cite{Car}, Cardaliaguet-Graber\cite{CarGra}, Cardaliaguet-Graber-Porretta-Tonon\cite{CarGraPorTon}, Cardaliaguet-Porretta \cite{CarPor}, Cirant-Gianni-Mannucci\cite{CirGiaMan}, Cirant-Goffi\cite{CirGof}, Ferreira-Gomez\cite{FerGom}, Ferreira-Gomez-Tada\cite{FerGomTad}, Gomez-Pimentel-Sanchez Morgado\cite{GomPimSan:15,GomPimSan:16}, Porretta \cite{Por} in the case that $F,G$ are locally dependent on the measure variable $m$. 
	
	\smallskip
	
	We term the above well-posed MFG system \eqref{eq:mfg} to be the forward problem. In this paper, we are mainly concerned with the inverse problem of determining the running cost $F$ or the terminal cost $G$ by knowledge of the total cost associated with the above MFG system. To that end, we introduce a measurement map $\mathcal{M}_{F,G}$ as follows:
	\begin{equation}\label{eq:M}
		\mathcal{M}_{F, G}(m_0(x))=u(x,t)\big|_{t=0},\quad x\in\mathbb{T}^n,
	\end{equation}
	where $\color{blue} m_0(x)\in\mathcal{O}_a$ and $u(x, t)$ are given in the MFG system \eqref{eq:mfg}. That is, for a given pair of $F$ and $G$, $\mathcal{M}_{F, G}$ sends a prescribed initial population distribution $m_0$ to $u(x, 0)$, which signifies the total cost of the MFG \eqref{eq:mfg}. In Section 3, we shall show that $\mathcal{M}_{F,G}$ is well-defined in proper function spaces. The inverse problem mentioned above can be formulated as:
	\begin{equation}\label{eq:ip1}
		\mathcal{M}_{F,G}\longrightarrow F\ \mbox{or/and}\ G. 
	\end{equation}
	In the mean field game theory, the running cost $F$ and the terminal cost $G$ are critical for the agents to decide the strategies. However, in practice they are often partially known or totally unknown for the agents, while the total cost $u(\cdot, 0)$ can be measured at the end of the game. This is a major motivation for us to propose and study the inverse problem \eqref{eq:ip1}. We believe our study could have many applications in the areas mentioned above. An example in our mind is the produce pricing. Suppose that in the market there are many companies are producing the same product for selling to make profits. As a customer, we do not have the information on the precise production cost, however we do know the selling price of the product at the end. Therefore, the recovery of the production cost is a typical inverse problem in the mean field game.

	In this paper, we are mainly concerned with the unique identifiability issue, which is of primary importance for a generic inverse problem. In its general formulation, the unique identifiability asks whether one can establish the following one-to-one correspondence:
	\begin{equation}\label{eq:ip2}
		\mathcal{M}_{F_1, G_1}=\mathcal{M}_{F_2, G_2}\quad\mbox{if and only if}\quad (F_1, G_1)=(F_2, G_2), 
	\end{equation}
	where $(F_j, G_j)$, $j=1,2$, are two configurations. 
	
	\smallskip
	
	Unlike the forward problem of MFGs, the theory of the inverse problem has not yet been well-established. To the best of our knowledge, only some numerical studies have been conducted to the inverse problem of MFGs. It starts from the recent work Ding-Li-Osher-Yin \cite{DingLiOsherYin}. The authors reconstructed the running cost from the observation of the distribution of the population and the agents' strategy. The running cost consists of a kinetic energy (with an unknown underlining metric) and a convolution-type running cost. The main goal there is to numerically recover the underlining metric and the convolution kernel. Another numerical work Chow-Fung-Liu-Nurbekyan-Osher \cite{ChowFungLiuNurbekyanOsher} considered a different inverse problem of MFGs. The work focused on the recovery of the running cost from a finite number of the boundary measurements of the population profile and boundary movement. Both studies mentioned above consider the MFG model that the running cost is non-locally dependent on the measure variable $m$. We would also like to mention a related study in \cite{initial vector}, where the authors proved that if additional knowledge about the initial vector $Du(x,0)$ is given, then the solutions to the MFG system are unique.
	
	\smallskip

	In our study of the inverse problem \eqref{eq:ip1}, we are mainly concerned with the data locally depending on the measure variable, i.e. $F(x, t, m(\cdot, t)):=F(x, t, m(x, t))$ and $G(x,m(\cdot, T)):=G(x,m(x, T))$. The model is motivated from the traffic flow and the crowd motion problems. For the problems, the cost depends only on the distribution of the population locally. We assume all the agents are rational and the observer only knows the total cost of agents at the end. The main goal is to recover the running or/and terminal costs. Let us briefly introduce the main results we prove in the paper. When the Lagrangian is a kinetic energy, we first show thatthe terminal cost $G$ is uniquely identifiable by the measurement map $\mathcal{M}_{F,G}$ by assuming the running cost $F$ is a-priori known. We emphasize that, for this inverse problem, we assume that the running and the terminal costs satisfy $F(x, t, 0)=G(x,0)=0$ and we justify that the assumption is necessary for both unique identifiability problems. Moreover, the running cost is allowed to be time-dependent. If the running cost is limited to the time-independent class, we further show that we can recover both the running and the terminal costs with the given measurement map $\mathcal{M}_{F,G}$. Finally, we extend a large extent of the above unique identifiability results to general Lagrangians. To establish those theoretical unique identifiability results, we develop novel mathematical strategies that make full use of the intrinsic structure of the MFG system. Our study opens up a new field of research on inverse problems for mean field games with many potential developments.

	\medskip
	
	The rest of the paper is organized as follows. We introduce the admissibility assumptions on $F$ and $G$ and state the main results of this paper in Section 2. In Section 3, we establish certain well-posedness results of the forward MFG system, which shall be needed for the inverse problems. We discuss the admissibility assumptions in Section 4. By counter examples, we show that those assumptions are unobjectionable for the inverse problems. Finally, we show various unique identifiability results in Section 5 and some generalizations in Section 6.
	
	\section{Preliminaries and Statement of Main Results}
	\subsection{Notations and Basic Setting}
	As introduced earlier, we let $n\in\mathbb{N}$ and $\mathbb{T}^n:=\mathbb{R}^n\backslash\ \mathbb{Z}^n$ be the $n$-dimensional torus. Set $x=(x_1,x_2,\ldots, x_n)\in\mathbb{R}^n$. 
	If $f(x): x\in\mathbb{T}^n\to \mathbb{R}$ is smooth and $l=(l_1,l_2,...,l_n)\in\mathbb{N}_0^n$ is a multi-index with $\mathbb{N}_0:=\mathbb{N}\cup\{0\}$, then $D^lf$ stands for the
	derivative $\frac{\p ^{l_1}}{\p x_1^{l_1}}...\frac{\p ^{l_n}}{\p x_n^{l_n}}f$. Given $\nu\in\mathbb{S}^{n-1}:=\{x\in\mathbb{R}^n; |x|=1\}$,
	we also denote by $\p_{\nu}f$ the directional derivative of $f$ in the direction $\nu$. For $k\in\mathbb{N}_0$ and $\alpha\in [0,1)$, we say $f\in C^{k+\alpha} (k\in\mathbb{N}_0)$ if $D^lf$ exists and $\alpha$- H\"older continuous for any $l\in\mathbb{N}_0^n$ with $|l|\leq k$. Define 
		\begin{equation}\label{eq:pf1}
			C_+^{k+\alpha}(\mathbb{T}^n):=\{f(x)\in C^{k+\alpha}(\mathbb{T}^n) : f(x)\geq0 \}.
		\end{equation}
		It is remarked that the set $C_+^{k+\alpha}(\mathbb{T}^n)$ shall be needed in order to fulfil the probability measure constraint in our subsequent analysis; see also \eqref{eq:pm1}.

	For functions $f:\mathbb T^n\times (0,T)\to\mathbb R$, we say $f$ belongs to $C^{k+\alpha,\frac{k+\alpha}{2}}$ if $D^l D_t^jf$ exists for any $l\in\mathbb{N}_0^n$ and $j\in\mathbb{N}_0$ with $ |l|+2j\leq k$ and 
	\[
	\sup_{(x_1,t_1),(x_2,t_2)\in \mathbb T^n\times (0,T)}\frac{|D^lD_t^jf(x_1,t_1)-D^lD_t^jf(x_2,t_2)|}{|x_1-x_2|^\alpha+|t_1-t_2|^{\frac{\alpha}{2}}}<\infty,
	\]	
	for any $l\in\mathbb N_0^n$ and $j\in\mathbb N_0$ with $|l|+2j=k$.
	
	Throughout the paper, for a function $f$ define on $\mathbb{T}^n$ or $\mathbb{T}^n\times(0,T)$, it means that it is a periodic-$1$ function with respect to the space variable $x_j$, $1\leq j\leq N$. That is, it is a periodic-$(1,1,\ldots,1)$ function with respect to $x\in \mathbb{R}^n$. 
	
	\subsection{Mean Field Game}

	Let $\mathcal{P}(\mathbb T^n)$ and $\mathcal{P}(\mathbb R^n)$ denote the set of probability measures on $\mathbb T^n$ and $\mathbb R^n$ respectively. Let $(\Omega,\mathscr{F},\mathbb F,\mathbb P)$ be a filtered probability space; $B$ be an $\mathbb F$-adapted Brownian motion on $\mathbb R^n$; and we assume $\mathscr{F}_0$ is rich enough to support $\mathcal{P}(\mathbb T^n)$. For any $\mathscr{F}$-measurable random variable $\xi$, we denote the law of $\xi$ on $\mathbb R^n$ by $\mathcal{L}_\xi\in\mathcal{P}(\mathbb R^n)$ and the law of $\xi$ on $\mathbb T^n$ by $\mathcal{L}_{\xi+\mathbb Z^n}\in\mathcal{P}(\mathbb T^n)$. Moreover, for any sub-$\sigma$-algebra $\mathcal{G}\subset \mathscr{F}$ and any $m\in\mathcal P(\mathbb T^n)$, $\mathbb M(\mathcal{G};m)$ denotes the set of $\mathcal{G}$-measurable random variables $\xi$ on $\mathbb R^n$ such that $\mathcal{L}_{\xi+\mathbb Z^n}=m$.

	Our mean field game depends on the following data:
	\[
	L:\mathbb T^n\times\mathbb R^n\to\mathbb R,\quad F:\mathbb T^n\times\mathcal{P}(\mathbb T^n)\to\mathbb R\quad\text{and}\quad G:\mathbb T^n\times\mathcal{P}(\mathbb T^n)\to\mathbb R.
	\]
	Let $T>0$.  For any $t_0\in [0,T]$, we let $\mathscr{A}_{t_0}$ denote the set of admissible controls $\alpha:[t_0,T]\times\mathbb T^n\to\mathbb R^d$ which are Borel measurable, and uniformly Lipschitz continuous in $x$. We also denote $B_t^{t_0}:=B_t-B_{t_0}$, $B_t^{0,t_0}:=B_t^0-B_{t_0}^0$, $t\in[t_0,T]$.

	Given $x\in\mathbb T^n$, $\alpha\in\mathscr{A}_{t_0}$ and the flow of probability measures $\{\rho_t\}_{t\in[0,T]}\subset \mathcal{P}(\mathbb T^n)$ with $\rho_0=m_0$,
	%  For any $\xi\in\mathbb L^2(\mathcal{F}_{t_0})$ and $\alpha\in\mathcal{A}_{t_0}$, the distribution of the agents satisfies the following controlled SDE on $[t_0,T]$:
	%  \begin{equation}\label{eq:pop}
		%  X_t^{t_0,\xi,\alpha}=\xi+\int_{t_0}^t\alpha(s,X_s^{t_0,\xi,\alpha})ds+B_t^{t_0}+\mathbb Z^n.
		%  \end{equation}
	%  Moreover,
	%   given $x\in\mathbb R^n$and , 
	the state of an agent satisfies the following controlled SDE (stochastic differential equation) on $[t_0,T]$:
	\begin{equation}\label{eq:ind}
		X_t^{t_0,x,\alpha}=x+\int_{t_0}^t\alpha(s,X_s^{t_0,x,\alpha})ds+\sqrt{2}B_t^{t_0}+\mathbb Z^n.
	\end{equation}
	Consider the conditionally expected cost for the mean field game:
	\begin{eqnarray}\label{eq:cost}
		&&J(t_0,x;\{\rho_{t}\}_{t\in [0,T]},\alpha):=\inf_{\alpha\in\mathscr{A}_{t_0}}\mathbb E\Big[\int_{t_0}^TL(X_t^{t_0,x,\alpha},\alpha(t,X_t^{t_0,x,\alpha}))+F(X_t^{t_0,x,\alpha},\rho_t)dt\nonumber\\
		&&\qquad\qquad\qquad\qquad\qquad\qquad+G(X_T^{t_0,x,\alpha},\rho_T)\Big].
	\end{eqnarray}
	%  It is obvious that $J(t_0,x,\xi;\{\rho_t\}_{t\in [0,T]},\alpha)$ is deterministic and law invariant. Therefore, we introduce (by abusing the notation): for any $\mu\in\mathcal{P}(\mathbb T^n)$ and $\xi\in\mathbb M(\mathcal{F}_{t_0},\mu)$
	%  \[
	%  J(t_0,x,\mu;\alpha,\alpha'):=J(t_0,x,\xi;\alpha,\alpha').
	%  \]
	\begin{defi}
		We say that $(\alpha^*,\{\rho_t^{*}\}_{t\in[0,T]})$ is a mean field equilibrium (MFE) if it satisfies the following properties:\\
		(i) $\rho_0^*=m_0$;\\
		(ii) for any $\xi_0\in \mathbb{M}(\mathcal{F}_0,m_0)$, we have $\mathcal{L}_{X_t^{0,\xi_0,\alpha^*}}=\rho_t^{*}$ where
		\[
		X_t^{0,\xi_0,\alpha^*}=\xi_0+\int_{0}^t\alpha^*(s,X_s^{0,\xi_0,\alpha^*})ds +\sqrt{2}B_t+\mathbb Z^n;
		\]
		(iii) for any $(t_0,x)\in[0,T]\times\mathbb T^n$, we have
		\begin{equation*}
			J(t_0,x;\{\rho_t^*\}_{t\in [0,T]},\alpha^*)=\inf_{\alpha\in\mathscr{A}_{t_0}}J(t_0,x;\{\rho_t^*\}_{t\in [0,T]},\alpha),\quad \text{for $\rho^*_{t_0}$-a.e. $x\in\mathbb T^n$.} 
		\end{equation*}
	\end{defi}
	When there is a unique MFE $(\alpha^*,\{\rho_t^{*}\}_{t\in[0,T]})$, then the mean field game leads to the following value function of the agent:
	\[
	u(t_0,x):=J(t_0,x;\{\rho_t^*\}_{t\in [0,T]},\alpha^*).
	\]
	Let $m(\cdot,t_0)=\rho_{t_0}^*$. Then $(u,m)$ solves the following mean field game system (cf. \cite{CarPor,Lions}):  
	\begin{equation}\label{main_equation}
		\begin{cases}
			\displaystyle{-\p_tu(x,t)-\Delta u(x,t)+\frac 1 2 |\nabla u(x, t)|^2= F(x,t,m(x,t)),}& \text{ in } \mathbb{T}^n\times(0,T),\smallskip\\
			\p_t m(x,t)-\Delta m(x,t)-{\rm div}\big(m(x,t)\nabla u(x,t)\big)=0,&\text{ in }\mathbb{T}^n\times(0,T),\smallskip \\
			u(x,T)=G(x,m(T,x)), \quad m(x,0)=m_0(x) &  \text{ in } \mathbb{T}^n,
		\end{cases}  	
	\end{equation}
	where as mentioned earlier, periodic boundary conditions are imposed on $\partial \mathbb{T}^n$ for $u$ and $m$. 
	
	\subsection{Inverse Problems}
	We recall the probability measure constraint $\mathcal{O}_a$ introduced in \eqref{eq:pm1}.
	Define the set
	\[
	\begin{split}
		\mathcal{E}_{F,G}:=& \{m_0\in C^{2+\alpha}(\mathbb{T}^n)\cap\mathcal{O}_a  : \text{the system }\eqref{main_equation}\\
		& \text{ has a unique solution in the sense described in Section 3 in what follows } \}.
	\end{split}
	\] 
	We introduce the following measurement map $\mathcal{M}_{F,G}$:
	\begin{align}\label{eq:G}
		\begin{split}
			\mathcal{M}_{F,G}: \mathcal{E}_{F,G} & \rightarrow  L^2(\mathbb{T}^n) , \\  
			m_0&\mapsto  \Big(x\in\mathbb{T}^n \mapsto u(x,t) \Big|_{t=0}\Big), 
		\end{split}
	\end{align}
	where $u(x,t)$ is the solution of $\eqref{main_equation}$ with initial data $m(x,0)=m_0(x).$
	
	In the first setup of our study, we consider the case that  $F$ and $G$ belong
	to an analytic class. Henceforth, we set
	\begin{equation}\label{eq:Q}
		Q=\overline{\mathbb{T}^n\times(0,T) },
	\end{equation}
	be the closure of $\mathbb{T}^n\times(0,T).$
	\begin{defi}\label{Admissible class1}
		We say $U(x,t,z):\mathbb{T}^n\times \mathbb{R}\times\mathbb{C}\to\mathbb{C}$ is admissible, denoted by $U\in \mathcal{A}$, if it satisfies the following conditions:
		\begin{enumerate}
			\item[(i)]~The map $z\mapsto U(\cdot,\cdot,z)$ is holomorphic with value in $C^{2+\alpha,1+\frac{\alpha}{2}}(Q)$ for some $\alpha\in(0,1)$;
			\item[(ii)] $U(x,t,0)=0$ for all $(x,t)\in\mathbb{T}^n\times (0,T).$ 	
		\end{enumerate}
		
		Clearly, if (1) and (2) are fulfilled, then $U$ can be expanded into a power series as follows:
		\begin{equation}\label{eq:F}
			U(x,t,z)=\sum_{k=1}^{\infty} U^{(k)}(x,t)\frac{z^k}{k!},
		\end{equation}
		where $ U^{(k)}(x,t)=\frac{\p^k U}{\p z^k}(x,t,0)\in C^{2+\alpha,1+\frac{\alpha}{2}}(Q).$
	\end{defi}
	%		
	%	If $F$ is independ of $t$, 
	%	
	%	
	%	 ($ii$)we say $F(x,z):\mathbb{R}^n\times\mathbb{C}\to\mathbb{R}$ is admissible or $F\in \mathcal{A}$ if it satisfies the following conditions:
	%	 \begin{enumerate}
		%	 	\item The map $z\to F(\cdot,z)$ is holomorphic with value in $C^{2+\alpha}(\mathbb{T}^n)$, $\alpha\in(0,1).$
		%	 	\item $F(x,0)=0, $ for all $x\in\mathbb{R}^n.$ 	
		%	 \end{enumerate}
	%	 
	%	 Clearly, if (1) and (2) are fulfilled, then $F$ can be expanded into a power series as follows:
	%	 \begin{equation}\label{eq:F2}
		%	 	F(x,z)=\sum_{k=1}^{\infty} F^{(k)}(x)\frac{z^k}{k!},
		%	 \end{equation}
	%	 where $ F^{(k)}(x)=\p_z^kF(x,0)\in C^{2+\alpha}(Q).$
	
	\begin{defi}\label{Admissible class2}
		We say $U(x,z):\mathbb{T}^n\times\mathbb{C}\to\mathbb{C}$ is admissible, denoted by $U\in\mathcal{B}$, if it satisfies the following conditions:
		\begin{enumerate}
			\item[(i)] The map $z\mapsto U(\cdot,z)$ is holomorphic with value in $C^{2+\alpha}(\mathbb{T}^n)$ for some $\alpha\in(0,1)$;
			\item[(ii)] $U(x,0)=0$ for all $x\in\mathbb{T}^n.$
		\end{enumerate}

		Clearly, if (1) and (2) are fulfilled, then $U$ can be expanded into a power series as follows:
		\begin{equation}\label{eq:G}
			U(x,z)=\sum_{k=1}^{\infty} U^{(k)}(x)\frac{z^k}{k!},
		\end{equation}
		where $ U^{(k)}(x)=\frac{\p^kU}{\p z^k}(x,0)\in C^{2+\alpha}(\mathbb{T}^n).$
	\end{defi}
	
	\begin{rmk}\label{rem:1}
		The admissibility conditions in Definitions~\ref{Admissible class1} and \ref{Admissible class2} shall be imposed as a-priori conditions on the unknowns $F$ and $G$ in what follows for our inverse problem study. It is remarked that as noted earlier that both $F$ and $G$ are functions of real variables. However, for technical reasons, we extend the functions to the complex plane with respect to the $z$-variable, namely $U(\cdot,z)$ and $ U(\cdot,\cdot,z)$, and assume that they are holomorphic as functions of the complex variable $z$. This also means that we shall assume $F$ and $G$ are restrictions of those holomorphic functions to the real line. This technical assumption shall be used to show the well-posedness of the MFG system in section $\ref{section wp}.$ Throughout the paper, we also assume that in the series expansions \eqref{eq:F} and \eqref{eq:G}, the coefficient functions $U^{(k)}$ are real-valued. 
	\end{rmk}
	
	\begin{rmk}
		We would like to emphasise that the zero conditions, namely the admissibility conditions (ii) in Definitions~\ref{Admissible class1} and \ref{Admissible class2}, are unobjectionable to our inverse problem study. In fact, in Section~4 in what follows, we shall construct several MFG examples where the zero admissibility conditions are violated and the associated inverse problems have no unique identifiability results. 
	\end{rmk}
	
	We are in a position to state the first unique recovery result for the inverse problem \eqref{eq:ip1}, which shows that one can recover the terminal cost $G$ from the measurement map $\mathcal{M}$. Here and also in what follows, we sometimes drop the dependence on $F, G$ of $\mathcal{M}$, and in particular in the case that one quantity is a-priori known, say  $\mathcal{M}_F$ or $\mathcal{M}_G$, which should be clear from the context. 

	\begin{thm}\label{der g}
		Assume $F \in\mathcal{A}$, $G_j\in\mathcal{B}$ ($j=1,2$). Let $\mathcal{M}_{G_j}$ be the measurement map associated to
		the following system:
		\begin{equation}\label{eq:mfg2}
			\begin{cases}
				-\p_tu(x,t)-\Delta u(x,t)+\frac 1 2 {|\nabla u(x,t)|^2}= F(x,t,m(x,t)),& \text{ in } \mathbb{T}^n\times (0,T),\medskip\\
				\p_t m(x,t)-\Delta m(x,t)-{\rm div}(m(x,t)\nabla u(x,t))=0,&\text{ in }\mathbb{T}^n\times(0,T),\medskip\\
				u(x,T)=G_j(x,m(x,T)), & \text{ in } \mathbb{T}^n,\medskip\\
				m(x,0)=m_0(x), & \text{ in } \mathbb{T}^n.\\
			\end{cases}  		
		\end{equation}	
		If for any $m_0\in C^{2+\alpha}(\mathbb{T}^n)\cap\mathcal{O}_a$, one has 
		$$\mathcal{M}_{G_1}(m_0)=\mathcal{M}_{G_2}(m_0),$$    then it holds that		
		$$G_1(x,z)=G_2(x,z)\ \text{  in  } \mathbb{T}^n\times \mathbb{R}.$$ 
	\end{thm}
	Notice that in Theorems \ref{der g} we allow $F$ to depend on time. If we assume  $F$ depends only on $x$ and $m(x,t)$, we can determine $F$ and $G$ simultaneously. 
	
	\begin{thm}\label{der F,g}
		Assume $F_j,G_j \in\mathcal{B}$ ($j=1,2$) . Let $\mathcal{M}_{F_j,G_j}$ be the measurement map associated to
		the following system:
		\begin{equation}\label{eq:mfg3}
			\begin{cases}
				-\p_tu(x,t)-\Delta u(x,t)+\frac 1 2 {|\nabla u(x,t)|^2}= F_j(x,m(x,t)),& \text{ in }\mathbb{T}^n\times(0,T),\medskip\\
				\p_t m(x,t)-\Delta m(x,t)-{\rm div}(m(x,t)\nabla u(x,t))=0,&\text{ in }\mathbb{T}^n\times (0,T),\medskip\\
				u(x,T)=G_j(x,m(x,T)), & \text{ in } \mathbb{T}^n,\medskip\\
				m(x,0)=m_0(x), & \text{ in } \mathbb{T}^n.\\
			\end{cases}  		
		\end{equation}	
		If for any $m_0\in C^{2+\alpha}(\mathbb{T}^n)\cap\mathcal{O}_a$, one has 
		$$\mathcal{M}_{F_1,G_1}(m_0)=\mathcal{M}_{F_2,G_2}(m_0),$$    then it holds that	
		$$(G_1(x,z),F_1(x,z))=(G_2(x,z),F_2(x,z)) \ \text{  in  } \mathbb{T}^n\times \mathbb{R}.$$ 
	\end{thm}
	
	In Theorems~ \ref{der g} and \ref{der F,g}, the Lagrangian is of a quadratic form, namely $H(x,\nabla u)$ in \eqref{eq:mfg} is of the form $\frac 1 2 |\nabla u|^2$ (see \eqref{eq:mfg2}--\eqref{eq:mfg3}). In fact, we can extend a large extent of the results in those theorems to the case with a general Lagrangian. We choose to postpone the statement of those results in Section~6 along with their proofs. 
	
	%Finaly, we extend our results to the system with general Lagrangians in section 6. 
	
	\section{Well-posedness of the forward problems}\label{section wp}
	In this section, we show the well-posedness of the MFG systems in our study. The key point is the infinite differentiability of the equation with respect to a given (small) input $m_0(x).$ As  a preliminary, we recall the well-posedness result
	for linear parabolic equations \cite{Lady}\cite[Lemma 3.3]{ CarDelLasLio} .
	\begin{lem}\label{linear app unique}
		Consider the parabolic equation 
		\begin{equation}\label{linearapp wellpose}
			\begin{cases}
				-\p_tv(x,t)-\Delta v(x,t)+{\rm div} ( a(x,t)\cdot\nabla v(x,t))= f(x,t),& \text{ in }\mathbb{T}^n\times(0,T),\medskip\\
				v(x,0)=v_0(x), & \text{ in } \mathbb{T}^n, 
			\end{cases}  	
		\end{equation}
		where the periodic boundary condition is imposed on $v$. Suppose $a,f\in C^{\alpha,\frac{\alpha}{2}}(Q) $ and $v_0\in C^{2+\alpha}(\mathbb{T}^n)$, then $\eqref{linearapp wellpose}$ has a unique classical solution $v\in C^{2+\alpha,1+\frac{\alpha}{2}}(Q).$
	\end{lem}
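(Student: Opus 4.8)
The plan is to treat \eqref{linearapp wellpose} as a linear, uniformly parabolic Cauchy problem on the compact boundaryless manifold $\mathbb{T}^n$ and to run the standard linear parabolic Schauder machinery, in the spirit of \cite{Lady} and \cite[Lemma 3.3]{CarDelLasLio}. I would first record two harmless normalizations: up to the time reflection $t\mapsto T-t$ the principal part $-\p_t-\Delta$ is the usual heat operator $\p_t-\Delta$, so all of the classical forward-parabolic theory applies; and the periodic boundary condition requires no boundary analysis, since one works throughout with $1$-periodic functions on $\mathbb{R}^n$. The core of the argument is then the a priori Schauder estimate
\[
\|v\|_{C^{2+\alpha,1+\frac{\alpha}{2}}(Q)}\le C\big(\|f\|_{C^{\alpha,\frac{\alpha}{2}}(Q)}+\|v_0\|_{C^{2+\alpha}(\mathbb{T}^n)}\big);
\]
existence and uniqueness then follow in the standard way, detailed below.

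Concretely, I would carry out four steps. (i) Reduce to zero Cauchy data: set $w:=v-v_0$, with $v_0=v_0(x)$ regarded as $t$-independent; then $w$ solves the same equation with right-hand side $f+\Delta v_0-{\rm div}(a\cdot\nabla v_0)\in C^{\alpha,\frac{\alpha}{2}}(Q)$ (using $v_0\in C^{2+\alpha}(\mathbb{T}^n)$ and $a\in C^{\alpha,\frac{\alpha}{2}}(Q)$) and has vanishing Cauchy data. (ii) Prove the displayed estimate: for the heat equation it is classical, and the extra term ${\rm div}(a\cdot\nabla v)$, read as a first-order drift $a\cdot\nabla v$, is lower order and is absorbed by parabolic interpolation, since $\|a\cdot\nabla v\|_{C^{\alpha,\frac{\alpha}{2}}(Q)}\lesssim\|a\|_{C^{\alpha,\frac{\alpha}{2}}(Q)}\,\|v\|_{C^{1+\alpha,\frac{1+\alpha}{2}}(Q)}$ and the $C^{1+\alpha,\frac{1+\alpha}{2}}$-norm interpolates between $C^{0}$ and $C^{2+\alpha,1+\frac{\alpha}{2}}$; if instead the term is to be read as modifying the principal part, one freezes the leading coefficients at each point and imposes uniform ellipticity of the resulting operator, which is still within the scope of the cited references. (iii) Obtain existence by the method of continuity applied to $\mathcal{L}_\tau v:=-\p_t v-\Delta v+\tau\,{\rm div}(a\cdot\nabla v)$, $\tau\in[0,1]$: at $\tau=0$ the heat equation with zero Cauchy data is solvable, e.g.\ by Fourier series on $\mathbb{T}^n$, and the set of solvable $\tau$ is open and closed by the uniform estimate from step (ii), hence is all of $[0,1]$. (iv) Uniqueness: the difference $w$ of two solutions has zero data, and after the substitution $w=e^{\lambda t}\hat w$ with $\lambda$ large the parabolic maximum principle forces $\hat w\equiv0$ — an $L^2$ energy estimate combined with Gr\"onwall's inequality would serve equally well.

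The one place I expect to need genuine care — and thus the main, if mild, obstacle — is pinning down the precise role of the term ${\rm div}(a\cdot\nabla v)$ under the stated hypothesis $a\in C^{\alpha,\frac{\alpha}{2}}(Q)$. If it is a genuine first-order drift, it is harmless as in step (ii). But if one reads it as the divergence-form second-order term ${\rm div}(a\nabla v)=a\Delta v+\nabla a\cdot\nabla v$, then deducing a classical $C^{2+\alpha,1+\frac{\alpha}{2}}$ solution from merely $a\in C^{\alpha,\frac{\alpha}{2}}$ is borderline — one would generically expect only $C^{1+\alpha}$ regularity, or would need to interpret $v$ in a weak sense — and one must additionally keep the modified principal part $-(1-a)\Delta$ uniformly elliptic. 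In all of the applications in this paper, $a$ will arise as (a component of) $\nabla u$ for a solution $u\in C^{2+\alpha,1+\frac{\alpha}{2}}(Q)$ of the MFG system, hence is much smoother than $C^{\alpha,\frac{\alpha}{2}}$ and small in the relevant norm, so this difficulty does not actually materialize; it is simply the step at which the structural hypotheses on $a$ are genuinely used.
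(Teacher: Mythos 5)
The paper offers no proof of this lemma at all --- it simply cites \cite{Lady} and \cite[Lemma 3.3]{CarDelLasLio} --- so your proposal is in effect supplying the standard linear parabolic Schauder argument that those references contain: reduction to zero Cauchy data, the a priori estimate with the first-order term absorbed by interpolation, the method of continuity for existence, and the maximum principle for uniqueness. That is the right machinery, and your discussion of the ambiguity in the term ${\rm div}\,(a\cdot\nabla v)$ (genuine drift versus divergence-form principal part, and the borderline regularity in the latter reading) is a legitimate and worthwhile observation about the statement as written; in the paper's actual applications the lemma is only ever invoked with a smooth, small drift, so your resolution is consistent with how it is used.

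There is, however, one slip in your normalization step that you should not wave away. You assert that the time reflection $t\mapsto T-t$ turns the principal part $-\p_t-\Delta$ into the forward heat operator $\p_t-\Delta$, ``so all of the classical forward-parabolic theory applies.'' But that reflection also transports the Cauchy data from $t=0$ to $t=T$: the reflected problem is the forward heat equation with \emph{terminal} data, which is exactly as ill-posed as the original pairing of the backward operator $-\p_t-\Delta$ with \emph{initial} data $v(x,0)=v_0$. As literally stated, the lemma concerns an ill-posed problem and admits no proof; the sign of $\p_t$ and the location of the data must be matched (either $+\p_t v-\Delta v+\cdots=f$ with $v(\cdot,0)=v_0$, or $-\p_t v-\Delta v+\cdots=f$ with data at $t=T$). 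This is evidently a typo in the paper --- the lemma is later applied precisely in the two compatible configurations, to the forward Fokker--Planck-type equation with initial data and to the backward HJB-type equation with terminal data --- but your proof should state the correction explicitly rather than claim the reflection alone repairs it. With the orientation fixed, the remainder of your argument is correct and is the standard one.
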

	
	The following result is somewhat standard (especially Theorem \ref{local_wellpose}-(a)), while our technical conditions could be different from those in the literature. For completeness we provide a proof here. The idea is to differentiate the equation infinitely many times with respect to  the (small) input $m_0(x)$. We recall that $Q$ is defined in \eqref{eq:Q} and periodic boundary conditions are imposed to the MFG systems. 
	The following proof is based on the implicit functions theorem for Banach spaces. One may refer to \cite{Pos.J} for more related details about the theory of maps between Banach spaces. 
	\begin{thm}\label{local_wellpose}
		Suppose that $F\in\mathcal{A}$ and $G\in\mathcal{B}$. The following results holds:
		\begin{enumerate}
			
			\item[(a)]
			There exist constants $\delta>0$ and $C>0$ such that for any 
			\[
			m_0\in B_{\delta}(C^{2+\alpha}(\mathbb{T}^n)) :=\{m_0\in C^{2+\alpha}(\mathbb{T}^n): \|m_0\|_{C^{2+\alpha}(\mathbb{T}^n)}\leq\delta \},
			\]
			the MFG system $\eqref{main_equation}$ has a solution $u \in
			C^{2+\alpha,1+\frac{\alpha}{2}}(Q)$ which satisfies
			\begin{equation}\label{eq:nn1}
				\|(u,m)\|_{ C^{2+\alpha,1+\frac{\alpha}{2}}(Q)}:= \|u\|_{C^{2+\alpha,1+\frac{\alpha}{2}}(Q)}+ \|m\|_{C^{2+\alpha,1+\frac{\alpha}{2}}(Q)}\leq C\|m_0\|_{ C^{2+\alpha}(\mathbb{T}^n)}.
			\end{equation}
			Furthermore, the solution $(u,m)$ is unique within the class
			\begin{equation}\label{eq:nn2}
				\{ (u,m)\in  C^{2+\alpha,1+\frac{\alpha}{2}}(Q)\times C^{2+\alpha,1+\frac{\alpha}{2}}(Q): \|(u,m)\|_{ C^{2+\alpha,1+\frac{\alpha}{2}}(Q)}\leq C\delta \}.
			\end{equation}		
			\item[(b)] Define a function 
			\[
			S: B_{\delta}(C^{2+\alpha}(\mathbb{T}^n))\to C^{2+\alpha,1+\frac{\alpha}{2}}(Q)\times C^{2+\alpha,1+\frac{\alpha}{2}}(Q)\ \mbox{by $S(m_0):=(u,v)$}. 
			\] 
			where $(u,v)$ is the unique solution to the MFG system \eqref{main_equation}.
			Then for any $m_0\in B_{\delta}(C^{2+\alpha}(\mathbb T^n))$, $S$ is holomorphic.
		\end{enumerate}
	\end{thm}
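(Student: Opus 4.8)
The plan is to prove Theorem~\ref{local_wellpose} by a fixed-point argument for part (a) and by the analytic implicit function theorem for part (b). Throughout, write $\mathcal{X}:=C^{2+\alpha,1+\frac{\alpha}{2}}(Q)$ and $\mathcal{Y}:=C^{2+\alpha}(\mathbb{T}^n)$, and recall that Lemma~\ref{linear app unique} provides a bounded solution operator for the linearized parabolic problems in these H\"older scales. Since $F(x,t,0)=0$ and $G(x,0)=0$ by the admissibility conditions, the pair $(u,m)=(0,0)$ solves \eqref{main_equation} with $m_0=0$, so we are looking for a branch of solutions bifurcating from zero.

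For part (a), first I would set up the solution map: given $m_0\in B_\delta(\mathcal{Y})$ and a candidate pair $(\tilde u,\tilde m)$ in a small ball of $\mathcal{X}\times\mathcal{X}$, solve the (decoupled, linear in the unknowns) system obtained by freezing the nonlinearities: solve the Fokker--Planck equation $\partial_t m-\Delta m-\mathrm{div}(m\nabla\tilde u)=0$, $m(\cdot,0)=m_0$ via Lemma~\ref{linear app unique} (here $a=-\nabla\tilde u\in C^{\alpha,\alpha/2}(Q)$), then solve the backward HJB equation $-\partial_t u-\Delta u=F(x,t,\tilde m)-\frac12|\nabla\tilde u|^2$ with terminal data $u(\cdot,T)=G(\cdot,\tilde m(\cdot,T))$; note that holomorphy of $F,G$ in $z$ plus $F(\cdot,\cdot,0)=G(\cdot,0)=0$ gives, for $\|\tilde m\|_{\mathcal X}$ small, the Lipschitz bounds $\|F(\cdot,\cdot,\tilde m)\|_{C^{\alpha,\alpha/2}}\lesssim\|\tilde m\|_{\mathcal X}$ and likewise for the difference of two inputs, and similarly for $G$ on the time slice. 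Reversing time turns the HJB equation into the form covered by Lemma~\ref{linear app unique}. Composing these two solves defines a map $\Phi_{m_0}:(\tilde u,\tilde m)\mapsto(u,m)$; the estimate \eqref{eq:nn1}-type bound and the contraction property on a ball of radius $C\delta$ follow from the linear estimates together with the quadratic/Lipschitz control of $\frac12|\nabla\tilde u|^2$ and of $F,G$, provided $\delta$ is chosen small enough. The Banach fixed point theorem then yields existence of a solution in the ball \eqref{eq:nn2}, uniqueness within that class, and the bound \eqref{eq:nn1}.

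For part (b), I would apply the holomorphic (analytic) implicit function theorem in Banach spaces. Define $\Psi:B_\delta(\mathcal{Y})\times\big(\text{ball in }\mathcal{X}\times\mathcal{X}\big)\to C^{\alpha,\alpha/2}(Q)\times\mathcal{Y}\times C^{\alpha,\alpha/2}(Q)$ collecting the three residuals of \eqref{main_equation}:
\begin{equation*}
\Psi(m_0,u,m)=\Big(-\partial_t u-\Delta u+\tfrac12|\nabla u|^2-F(x,t,m),\ u(\cdot,T)-G(\cdot,m(\cdot,T)),\ \partial_t m-\Delta m-\mathrm{div}(m\nabla u)\Big).
\end{equation*}
Because $z\mapsto F(\cdot,\cdot,z)$ and $z\mapsto G(\cdot,z)$ are holomorphic with values in the stated H\"older spaces, the Nemytskii-type superposition operators $m\mapsto F(\cdot,\cdot,m)$ and $m\mapsto G(\cdot,m(\cdot,T))$ are holomorphic between the relevant Banach spaces (their power series in $z$ converge in operator norm on a small ball, using the algebra property of $C^{2+\alpha,1+\alpha/2}$ and boundedness of the trace at $t=T$); the remaining terms are polynomial in $(u,m)$, hence holomorphic. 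Thus $\Psi$ is holomorphic, $\Psi(0,0,0)=0$, and the partial derivative $D_{(u,m)}\Psi(0,0,0)$ is the linear operator $(u,m)\mapsto(-\partial_t u-\Delta u-F^{(1)}(x,t,0)m,\ u(\cdot,T)-G^{(1)}(\cdot,0)m(\cdot,T),\ \partial_t m-\Delta m)$. This triangular linear system is invertible: first solve the heat equation for $m$ from the third component and the initial condition, then the backward equation for $u$, invoking Lemma~\ref{linear app unique} in both steps; boundedness of the inverse is the estimate from that lemma. The analytic implicit function theorem then produces a holomorphic map $m_0\mapsto(u,m)$ on a possibly smaller ball, which by the uniqueness in part (a) coincides with $S$; shrinking $\delta$ once more if necessary gives holomorphy of $S$ on $B_\delta(\mathcal{Y})$.

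The main obstacle I expect is the holomorphy of the superposition operators $m\mapsto F(x,t,m)$ (and the terminal trace version for $G$): one must verify that the series $\sum_k F^{(k)}(x,t)\,m^k/k!$ converges in $C^{2+\alpha,1+\alpha/2}(Q)$ — equivalently in $C^{\alpha,\alpha/2}(Q)$ after it is used as a source — uniformly for $m$ in a small ball, and that the map is complex-differentiable. This hinges on the Banach-algebra structure of the H\"older space (so $\|m^k\|\le C^{k-1}\|m\|^k$), on the uniform bound for the coefficients $\|F^{(k)}\|$ coming from Cauchy's estimates applied to the given holomorphic dependence on $z$, and on choosing the radius $\delta$ of the $m_0$-ball small enough that $C\delta$ lies strictly inside the radius of convergence. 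Everything else — the linear estimates, the contraction, the invertibility of the linearization — is routine once Lemma~\ref{linear app unique} and these holomorphy facts are in hand.
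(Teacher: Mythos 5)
Your proposal is correct, and your part (b) is essentially the paper's entire proof: the paper also defines the residual map (their $\mathscr{K}$, your $\Psi$) on the same H\"older scales, verifies it is holomorphic via local boundedness plus weak holomorphy (using exactly the Cauchy-estimate/Banach-algebra argument you flag as the main obstacle to justify convergence of $\sum_k F^{(k)}m^k/k!$ in $C^{\alpha,\alpha/2}(Q)$), inverts the triangular linearization $(u,m)\mapsto(-\partial_tu-\Delta u-F^{(1)}m,\ u|_{t=T}-G^{(1)}m(\cdot,T),\ \partial_tm-\Delta m)$ by first solving the heat equation for $m$ and then the backward equation for $u$ via Lemma~\ref{linear app unique}, and invokes the analytic implicit function theorem. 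Where you genuinely diverge is part (a): you prove existence, uniqueness and the estimate \eqref{eq:nn1} separately by a Banach fixed-point iteration, whereas the paper reads all of part (a) directly off the implicit function theorem (existence and uniqueness of $S(m_0)$ on a small ball, and \eqref{eq:nn1} from the Lipschitz bound on the holomorphic map $S$ together with $S(0)=(0,0)$). Your route costs an extra argument but is more elementary and would yield (a) under weaker (merely Lipschitz, non-analytic) hypotheses on $F$ and $G$; the paper's route is shorter and delivers the quantitative estimate and holomorphy simultaneously.

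One small caveat on your contraction step: as written, the $u$-component of $\Phi_{m_0}$ depends on the input $\tilde m$ through $F(\cdot,\cdot,\tilde m)$ and $G(\cdot,\tilde m(\cdot,T))$ with Lipschitz constant of order $\|F^{(1)}\|+\|G^{(1)}\|$, which is $O(1)$ rather than $O(\delta)$, so $\Phi_{m_0}$ is not literally a contraction for the unweighted product norm on an isotropic ball. This is repaired by any of the standard devices --- feed the freshly computed $m$ (rather than $\tilde m$) into the HJB solve so that the iteration effectively acts on $\tilde u$ alone, pass to a weighted norm on $\mathcal X\times\mathcal X$, or observe that $\Phi_{m_0}^2$ contracts since the Lipschitz matrix has spectral radius $O(\sqrt{\delta})$ --- but one of these should be said explicitly.
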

	\begin{proof}
		Let 
		\begin{align*}
			&X_1:= C^{2+\alpha}(\mathbb{T}^n ), \\
			&X_2:=C^{2+\alpha,1+\frac{\alpha}{2}}(Q)\times C^{2+\alpha,1+\frac{\alpha}{2}}(Q),\\
			&X_3:=C^{2+\alpha}(\mathbb{T}^n)\times C^{2+\alpha}(\mathbb{T}^n)\times C^{\alpha,\frac{\alpha}{2}}(Q )\times C^{\alpha,\frac{\alpha}{2}}(Q ),
		\end{align*} and we define a map $\mathscr{K}:X_1\times X_2 \to X_3$ by that for any $(m_0,\tilde u,\tilde m)\in X_1\times X_2$,
		\begin{align*}
			&
			\mathscr{K}( m_0,\tilde u,\tilde m)(x,t)\\
			:=&\big( \tilde u(x,T)-G(x,\tilde m(x,T)), \tilde m(x,0)-m_0(x) , 
			-\p_t\tilde u(x,t)-\Delta \tilde u(x,t)\\ &+\frac{|\nabla \tilde u(x,t)|^2}{2}- F(x,t,\tilde m(x,t)), 
			\p_t \tilde m(x,t)-\Delta \tilde m(x,t)-{\rm div}(\tilde m(x,t)\nabla \tilde u(x,t))  \big) .
		\end{align*}

		First, we show that $\mathscr{K} $ is well-defined. Since the
		H\"older space is an algebra under the point-wise multiplication, we have $|\nabla u|^2, {\rm div}(m(x,t)\nabla u(x,t))  \in C^{\alpha,\frac{\alpha}{2}}(Q ).$
		By the Cauchy integral formula,
		\begin{equation}\label{eq:F1}
			F^{(k)}\leq \frac{k!}{R^k}\sup_{|z|=R}\|F(\cdot,\cdot,z)\|_{C^{\alpha,\frac{\alpha}{2}}(Q ) },\ \ R>0.
		\end{equation}
		Then there is $L>0$ such that for all $k\in\mathbb{N}$,
		\begin{equation}\label{eq:F2}
			\left\|\frac{F^{(k)}}{k!}m^k\right\|_{C^{\alpha,\frac{\alpha}{2}}(Q )}\leq \frac{L^k}{R^k}\|m\|^k_{C^{\alpha,\frac{\alpha}{2}}(Q )}\sup_{|z|=R}\|F(\cdot,\cdot,z)\|_{C^{\alpha,\frac{\alpha}{2}}(Q ) }.
		\end{equation}
		By choosing $R\in\mathbb{R}_+$ large enough and by virtue of \eqref{eq:F1} and \eqref{eq:F2}, it can be seen that the series \eqref{eq:F} converges in $C^{\alpha,\frac{\alpha}{2}}(Q )$ and therefore $F(x,m(x,t))\in  C^{\alpha,\frac{\alpha}{2}}(Q ).$ Similarly, we have $G(x,m(x,T))\in C^{2+\alpha}(\mathbb{T}^n).$ 
		This implies that $\mathscr{K} $ is well-defined.

		Let us show that $\mathscr{K}$ is holomorphic. Since $\mathscr{K}$ is clearly locally bounded, it suffices to verify that it is weakly holomorphic; see \cite[P.133 Theorem 1]{Pos.J}. That is we aim to show the map
		$$\lambda\in\mathbb C \mapsto \mathscr{K}((m_0,\tilde u,\tilde m)+\lambda (\bar m_0,\bar u,\bar m))\in X_3,\quad\text{for any $(\bar m_0,\bar u,\bar m)\in X_1\times X_2$}$$
		is holomorphic. In fact, this follows from the condition that $F\in\mathcal{A}$ and $G\in\mathcal{B}$.

		Note that $  \mathscr{K}(0,0,0)=0$. Let us compute $\nabla_{(\tilde u,\tilde m)} \mathscr{K} (0,0,0)$:
		\begin{equation}\label{Fer diff}
			\begin{aligned}
				\nabla_{(\tilde u,\tilde m)} \mathscr{K}(0,0,0) (u,m) =(& u|_{t=T}-G^{(1)}m(x,T), m|_{t=0}, \\
				&-\p_tu(x,t)-\Delta u(x,t)-F^{(1)}m, \p_t m(x,t)-\Delta m(x,t)).
			\end{aligned}			
		\end{equation}

		By Lemma $\ref{linear app unique}$, 
		if $\nabla_{(\tilde u,\tilde m)} \mathscr{K} (0,0,0)=0$, we have 
		$ \tilde m=0$ and then $\tilde u=0$. Therefore, the map is injective. 
		
		On the other hand,  letting $ (r(x),s(x,t))\in C^{2+\alpha}(\mathbb{T}^n)\times C^{\alpha,\frac{\alpha}{2}}(Q ) $,
		and by Lemma $\ref{linear app unique}$, there exists $a(x,t)\in C^{2+\alpha,1+\frac{\alpha}{2}}(Q)$ such that
		\begin{equation*}
			\begin{cases}
				\p_t a(x,t)-\Delta a(x,t)=s(x,t)  &\text{ in } \mathbb{T}^n,\medskip\\
				a(x,0)=r(x)                       &  \text{ in } \mathbb{T}^n .
			\end{cases}
		\end{equation*}
		Then letting $ (r'(x),s'(x,t))\in C^{2+\alpha}(\mathbb{T}^n)\times C^{\alpha,\frac{\alpha}{2}}(Q ) $, one can show that there exists $ b(x,t)\in C^{2+\alpha,1+\frac{\alpha}{2}}(Q)$ such that
		\begin{equation*}
			\begin{cases}
				-\p_t b(x,t)-\Delta b(x,t)-F^{(1)}a=s'(x,t)  &\text{ in } \mathbb{T}^n,\medskip\\
				b(x,T)=G^{(1)}a(x,T)+r'(x)                  &  \text{ in } \mathbb{T}^n.
			\end{cases}
		\end{equation*}
		
		Therefore, $\nabla_{(\tilde u,\tilde m)} \mathscr{K} (0,0,0)$ is a linear isomorphism between $X_2$ and $X_3$. Hence, by the implicit function theorem, there exist $\delta>0$ and a unique holomorphic function $S: B_{\delta}(\mathbb{T}^n)\to X_2$ such that $\mathscr{K}(m_0,S(m_0))=0$ for all $m_0\in B_{\delta}(\mathbb{T}^n) $.
		
		By letting $(u,m)=S(m_0)$, we obtain the unique solution of the MFG system \eqref{main_equation}. Let $ (u_0,v_0)=S(0)$.   Since $S$ is Lipschitz, we know that there exist constants $C,C'>0$ such that 
		\begin{equation*}
			\begin{aligned}
				&\|(u,m)\|_{ C^{2+\alpha,1+\frac{\alpha}{2}}(Q)^2}\\
				\leq& C'\|m_0\|_{B_{\delta}(\mathbb{T}^n)} +\|u_0\|_	{ C^{2+\alpha,1+\frac{\alpha}{2}}(Q)}+\|v_0\|_{ C^{2+\alpha,1+\frac{\alpha}{2}}(Q)}\\
				\leq& C \|m_0\|_{B_{\delta}(\mathbb{T}^n)}.
			\end{aligned}
		\end{equation*}
		
		The proof is complete. 
	\end{proof}
	\begin{rmk}
		Regarding the local well-posedness, several remarks are in order.
		\begin{enumerate}
			
			\item[(a)] The conditions on $F$ and $G$ (Definition \ref{Admissible class1}-(i) and $G$ satisfies Definition \ref{Admissible class2}-(i) ) are not essential and it is for convenience to
			apply implicit function theorem . Also, the analytic conditions on $F$ and $G$ can be replayed by weaker regularity conditions in the proof of the local well-posedness  \cite{Lions} , but these conditions will be utilized in our 
			inverse problem study. 
			
			\item[(b)] In order to apply the higher order linearization method that shall be developed in Section 5 for the inverse problems, we need the infinite differentiability of the equation with respect to the given input $m_0(x)$, it is shown by the fact that the solution map $S$ is holomorphic.

			\item[(c)] In the proof of Theorem $\ref{local_wellpose}$, we show the solution map $S$ is holomorphic. As a corollary,  the measurement map $\mathcal{M}=(\pi_1\circ S)\Big|_{t=0}$ is also holomorphic, where $\pi_1$ is the projection map with respect to the first variable.
			
		\end{enumerate}	
		
	\end{rmk}
	\section{Non-uniqueness and discussion on the zero admissibility conditions}
	In this section, we show that the zero admissibility conditions, namely $F(x,t, 0)=0$ and $G(x,0)=0$ in Definitions~\ref{Admissible class1} and \ref{Admissible class2} are unobjectionably necessary if one intends to uniquely recover $F$ or $G$ by knowledge of the measurement operator $\mathcal{M}_{F, G}$ for the inverse problem \eqref{eq:ip1}. For simplicity, we only consider the case that the space dimension $n=1$ without the periodic boundary conditions. That is, we consider the following MFG system:
	\begin{equation}\label{dim1}
		\begin{cases}
			-\p_tu_j(x,t)-\p_{xx} u_j(x,t)+\frac 1 2 {|\p_x u_j(x)|^2}= F_j(x,t,v_j(x,t)),& \text{ in } \mathbb{R}\times (0,T),\medskip\\
			\p_t v_j(x,t)-\p_{xx} v_j(x,t)-\p_x(v_j(x,t)\p_x u_j(x,t))=0,&\text{ in } \mathbb{R}\times(0,T),\medskip\\
			u_j(x,T)= G_j(x,v_j(x,T)), & \text{ in } \mathbb{R},\medskip\\
			v_j(x,0)=m_0(x), & \text{ in } \mathbb{R}.\\
		\end{cases}  		
	\end{equation}
	Furthermore, we assume $T$ is small enough such that the solution of the MFG system \eqref{dim1} is unique \cite{Amb:18,Amb:21,Amb22,Cira,Lions}. In what follows, we construct examples to show that if the zero admissibility conditions are violated then the corresponding inverse problems do not have uniqueness. 
	
	\begin{prop}
		Consider the system $\eqref{dim1}$. There exist $F_1=F_2\in C^{\infty}(\mathbb{R}\times\mathbb{R}\times\mathbb{R})$ and $G_1\neq G_2\in C^{\infty}(\mathbb{R}\times\mathbb{R})$ (but we do not have $G_1(x,0)=G_2(x,0)=0$) such that  the corresponding two systems admit the same measurement map, i.e. $\mathcal{M}_{G_1}=\mathcal{M}_{G_2}$.
	\end{prop}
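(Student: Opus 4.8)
The plan is to choose $F_1=F_2$ and $G_1,G_2$ so that $G_1$ and $G_2$ differ only on a portion of the $z$-axis that the MFG dynamics in \eqref{dim1} can never visit, which makes the two systems effectively identical. The key observation is that for any solution of \eqref{dim1} the component $v_j$ is an evolving population density: since the prescribed initial distribution $m_0$ is non-negative (being a population density), and $v_j$ solves the Fokker--Planck equation $\p_t v_j-\p_{xx}v_j-\p_x(v_j\p_x u_j)=0$, which is linear in $v_j$ with bounded continuous drift $\p_x u_j$, the parabolic maximum principle yields $v_j(\cdot,t)\ge 0$ on $\mathbb{R}$ for every $t\in[0,T]$. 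Hence the terminal relation $u_j(x,T)=G_j(x,v_j(x,T))$ only ever evaluates $G_j$ at a non-negative second argument.

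Concretely, I would take $F_1=F_2\equiv 0$, fix a constant $c\neq 0$, pick any $\psi\in C^\infty(\mathbb{R})$ that vanishes identically on $[0,\infty)$ but not on $(-\infty,0)$ (such functions exist, e.g.\ $\psi(z)=e^{1/z}$ for $z<0$ and $\psi(z)=0$ otherwise, which is smooth with all derivatives vanishing at the origin), and set $G_1(x,z):=c$ and $G_2(x,z):=c+\psi(z)$. Then $G_1,G_2\in C^\infty(\mathbb{R}\times\mathbb{R})$, $G_1\neq G_2$, and $G_1(x,0)=G_2(x,0)=c\neq 0$, so the zero admissibility condition is violated for both. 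By the previous paragraph, $G_1(x,v(x,T))=G_2(x,v(x,T))$ for every admissible density $v\ge 0$, and since the remaining three equations in \eqref{dim1} do not involve $G$, a pair $(u,v)$ solves \eqref{dim1} with $j=1$ if and only if it solves \eqref{dim1} with $j=2$. Therefore $\mathcal{E}_{F,G_1}=\mathcal{E}_{F,G_2}$ and $\mathcal{M}_{G_1}(m_0)=\mathcal{M}_{G_2}(m_0)$ for every $m_0$ in this common domain, that is $\mathcal{M}_{G_1}=\mathcal{M}_{G_2}$. To see this explicitly, note that with $F\equiv 0$ the pair $(u,v)$ with $u\equiv c$ and $v$ the solution of the heat equation $\p_t v=\p_{xx}v$, $v(\cdot,0)=m_0$, solves both systems, and for $T$ small it is the unique solution by the references already invoked in the statement, so in fact $\mathcal{M}_{G_1}\equiv\mathcal{M}_{G_2}\equiv c$.

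I do not expect a genuine obstacle: the whole idea is to park the discrepancy $G_1-G_2=-\psi$ in the unphysical region $\{z<0\}$, which the density component $v_j$ never reaches, so that the measurement map — indeed the entire forward problem — cannot detect the difference. The only inputs are the non-negativity of solutions of the linear Fokker--Planck equation with non-negative initial data, which is precisely what renders $G_1-G_2$ invisible, together with the small-$T$ well-posedness already assumed for \eqref{dim1}; the rest is bookkeeping. The only point requiring a little care is to confirm that $v_j(\cdot,T)$ indeed stays non-negative under the (mild) regularity of $u_j$ supplied by that well-posedness, but this is immediate from the maximum principle.
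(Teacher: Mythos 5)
There is a genuine gap, and it lies in the domain on which you claim the two measurement maps agree. Your entire construction hides the discrepancy $G_2-G_1=\psi$ in the region $\{z<0\}$ and then argues that the density component never goes negative. That is only true if the measurement map is probed exclusively with non-negative initial data $m_0$. But that is not how $\mathcal{M}$ is used in this paper: the domain $\mathcal{E}_{F,G}$ consists of (small) $m_0\in C^{2+\alpha}$ with no sign constraint, the hypotheses of Theorems~\ref{der F}--\ref{der F,g} read ``for any $m_0\in C^{2+\alpha}$'', and the linearization inputs $\sum_l\varepsilon_l f_l$ are arbitrary sign-changing functions — that freedom is the engine of all the uniqueness proofs. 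For a small $m_0$ that is negative somewhere, the terminal density $v_2(\cdot,T)$ is negative on a set of positive measure, so $u_2(x,T)=c+\psi(v_2(x,T))>c$ there; by the comparison principle for the backward HJB equation (applied to $w=u_2-c$, which satisfies $-\p_t w-\p_{xx}w+\tfrac12|\p_x w|^2=0$ with $w(\cdot,T)\ge 0$, $\not\equiv 0$) one gets $u_2(x,0)>c=u_1(x,0)$. Hence $\mathcal{M}_{G_1}\neq\mathcal{M}_{G_2}$ in the sense the paper needs, and the proof does not go through.

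There is also a conceptual mismatch with the purpose of the proposition. Your $G_1$ and $G_2$ coincide identically on the reachable set $\{z\ge 0\}$, so the ``non-uniqueness'' you exhibit has nothing to do with violating $G(x,0)=0$: the same trick with $c=0$ would give $G_1(x,0)=G_2(x,0)=0$ and still identical forward problems on non-negative data. (This does not contradict Theorem~\ref{der g} only because your $\psi$ is not analytic, hence $G_2\notin\mathcal{B}$.) So even on its own terms the example probes the analyticity assumption rather than the zero admissibility condition. The paper's proof avoids both issues by taking $F_1=F_2$ and $G_1\ne G_2$ independent of the measure variable and exhibiting explicit solutions $u_1=(e^t-1)\sin x$, $u_2=(1-e^t)\sin x$ of the two systems with $u_1(\cdot,0)=u_2(\cdot,0)=0$; since $u_j$ does not depend on $m_0$ at all, the two measurement maps coincide for every admissible $m_0$, sign-changing or not, and the difference $G_1-G_2=2(e^T-1)\sin x$ is visible on the whole state space.
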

	\begin{proof}
		Set 
		\[
		F_1=F_2=-\sin(x)+\frac{1}{4}(e^t-1)^2\cos^2(x),
		\] 
		and
		\[
		G_1=(e^T-1)\sin(x),\quad G_2=(1-e^T)\sin(x).
		\]
		It can be directly verified that 
		\[
		u_1(x,t)=(e^t-1)\sin(x)\quad\mbox{and}\quad u_2(x,t)=(1-e^t)\sin(x),
		\]  
		satisfy the corresponding  system. In this case, we have $\mathcal{M}_{G_1}(m_0)=\mathcal{M}_{G_2}(m_0)=0$ for any admissible $m_0$. 
	\end{proof}
	
	\begin{prop}\label{2}
		Consider the system $\eqref{dim1}$. There exist $G_1=G_2\in C^{\infty}(\mathbb{R}\times\mathbb{R})$ and $F_1\neq F_2\in C^{\infty}(\mathbb{R}\times\mathbb{R}\times\mathbb{R})$ (but we do not have $F_j(x,t,0)=0$, $j=1,2$) such that  the corresponding two systems admit the same measurement map, i.e. $\mathcal{M}_{F_1,G_1}=\mathcal{M}_{F_2,G_2}$.
	\end{prop}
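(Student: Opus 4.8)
The plan is to mirror the construction used for the preceding proposition, with the roles of $F$ and $G$ interchanged: prescribe the value functions explicitly and then read off the running costs from the Hamilton--Jacobi equation. The guiding observation is that if $F_j$ and $G_j$ are chosen \emph{independent of the measure variable}, then the first equation of the system \eqref{dim1} decouples entirely from the Fokker--Planck equation; its solution $u_j$ is determined without reference to $m_0$, so $\mathcal{M}_{F_j}(m_0)=u_j(\cdot,0)$ is one fixed function, the same for every admissible $m_0$. It therefore suffices to produce two distinct admissible data sets with $u_1(\cdot,0)=u_2(\cdot,0)$.

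Concretely, I would take
\[
u_1(x,t):=t(T-t)\sin x,\qquad u_2(x,t):=t(T-t)\sin(2x),
\]
set the common terminal cost $G_1=G_2\equiv 0$ (so that, incidentally, even the zero admissibility condition on $G$ holds, making it plain that the failure of unique identifiability is caused solely by $F$), and \emph{define}
\[
F_j(x,t,z):=-\p_t u_j(x,t)-\p_{xx} u_j(x,t)+\frac{1}{2}\big|\p_x u_j(x,t)\big|^2,
\]
which lies in $C^\infty(\mathbb{R}\times\mathbb{R}\times\mathbb{R})$ and does not depend on $z$. By construction each $u_j$ solves the HJB equation in \eqref{dim1} with source $F_j$, with $u_j(x,T)=0=G_j(x,v_j(x,T))$ and $u_j(x,0)=0$. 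The two remaining requirements are then immediate: evaluating at $t=0$ gives $F_1(x,0,0)=-T\sin x\not\equiv 0$ and $F_2(x,0,0)=-T\sin(2x)\not\equiv 0$, so the zero condition on $F$ is violated, and these same expressions already show $F_1\neq F_2$. (Equivalently, $F_1=F_2$ together with $u_1(\cdot,T)=u_2(\cdot,T)$ would force $u_1\equiv u_2$ by uniqueness for the viscous Hamilton--Jacobi equation run backward in time, contradicting $\sin x\not\equiv\sin(2x)$.)

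To finish, one checks that $u_j$ is genuinely the $u$-component of the unique solution of \eqref{dim1}. Since $F_j$ and $G_j$ involve no measure, the backward HJB problem for $u_j$ with terminal datum $0$ and source $F_j$ becomes, after reversing time, a forward viscous Hamilton--Jacobi equation with smooth bounded data, whose unique solution is exactly $u_j$; substituting $u_j$ into the Fokker--Planck equation with initial datum $m_0$ yields a unique $v_j$, and for $T$ small this pair is the unique MFE assumed in this section. Hence $\mathcal{M}_{F_j}(m_0)=u_j(\cdot,0)=0$ for every admissible $m_0$, so $\mathcal{M}_{F_1}=\mathcal{M}_{F_2}$ while $F_1\neq F_2$ and $F_j(\cdot,\cdot,0)\not\equiv 0$.

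There is no substantial obstacle here; the one point requiring a little care is the simultaneity of the constraints — the two value functions must agree at \emph{both} time endpoints $t=0$ and $t=T$ (to share a terminal cost and to yield the same, here vanishing, boundary measurement) while differing in the interior — after which the quadratic Hamiltonian automatically separates the two source terms, as noted above.
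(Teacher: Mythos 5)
Your proof is correct and follows essentially the same strategy as the paper's: take $G_1=G_2\equiv 0$, choose two value functions that vanish at both $t=0$ and $t=T$ but differ in between, and define $F_j$ as the left-hand side of the HJB equation applied to $u_j$, independent of the measure variable, so that the HJB decouples and $\mathcal{M}_{F_1}=\mathcal{M}_{F_2}=0$. The only difference is the explicit choice of $u_j$ (the paper uses $u_j=jxt(t-T)$), which is immaterial.
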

	\begin{proof}
		Set
		\[
		F_1=-x(2t-T)+\frac{t^2(t-T)^2}{2},\quad F_2=-2x(2t-T)+2t^2(t-T)^2,
		\]
		and
		\[
		G_1=G_2=0. 
		\]
		Here, it is noted that $F_1$ and $F_2$ are independent of $v$. In such a case, it is straightforward to verify that $u_j(x,t)=jxt(t-T)$ is the solution of  the corresponding  system \eqref{dim1}. Clearly, one has $\mathcal{M}_{F_1}(m_0)=\mathcal{M}_{F_2}(m_0)=0$ for any admissible $m_0$. 
	\end{proof}
	
	Moreover, we can find $F_1, F_2\in C^{\infty}(\mathbb{R}\times\mathbb{R})$ which are independent of $t$ such that Proposition $\ref{2}$ holds.
	
	\begin{proof}

		Define
		$$Lu_j:=-\p_tu(x,t)-\p_{xx} u(x,t)+\frac{|\p_x u|^2}{2}.$$ 
		It is sufficient for us to show that there exist $u_1(x,t),u_2(x,t)$ such that
		\begin{enumerate}		
			\item[(1)] $L u_1\neq L u_2$ and $\p_t (Lu_j)=0 $ for $ j=1,2$;
			
			\item[(2)] $u_1(x,0)=u_2(x,0)$ and $u_1(x,T)=u_2(x,T)$.
		\end{enumerate}
		In fact, if this is true, we can set $F_j= Lu_j$ and $G(x)=u_1(x,T)$. Then one has $G_1=G_2.$
		
		Without loss of generality, we assume $T=1.$	
		Let $p(t)$ be a non-zero solution of the following ordinary differential equation (ODE):
		\begin{equation*}
			(	\ln( p'(t)))'=\frac{\sqrt{1+4t}}{2},
		\end{equation*}
		and $q(t)$ be a solution of the ODE:
		\begin{equation*}
			\begin{cases}
				&2q'(t)+\sqrt{1+4t}\, q''(t)=p(t)p'(t)\sqrt{1+4t},\medskip\\
				&q(0)=0.
			\end{cases}
		\end{equation*}
		With $p(t)$ and $q(t)$ given above, we can set 
		\[
		u_1(x,t)=p(t(t-1))x+q(t(t-1))\quad\mbox{and}\quad u_2(x,t)=q(t(t-1))x+2q(t(t-1)).
		\]
		It can be directly verified that $u_1$ and $u_2$ fulfil the requirements (1) and (2) stated above. 
	\end{proof}
	
	Finally, we would like to remark that by following a similar spirit, one may construct similar examples as those in Proposition 4.1 and 4.2 to the MFG system \eqref{dim1} associated with a periodic boundary condition. However, this shall involve a bit more tedious calculations and is not the focus of the current study. We choose not to explore further. As also stated earlier, it is unobjectionable to see that the zero admissibility conditions are necessary for the inverse problem study. 
	
	\section{Proofs of Theorems~\ref{der g} and \ref{der F,g}}
	
	In this section, we present the proofs of the three main theorems, namely Theorems, \ref{der g} and \ref{der F,g}. 
	To that end, we first introduce a higher order linearization procedure associated with the MFG system \eqref{main_equation} which shall be repeatedly used in the proofs. We also refer to \cite{LLLZ} where a higher order linearization procedure was considered for a semi-linear parabolic equation. 
	%			
	%			
	%			We show the main results in the section. We first introduct the   higher order
	%			linearization method [] and apply it to the system $\eqref{main_equation}$. 
	%			Then we  derive the unique determination of the coefficient
	%			for the linear parabolic equation. Finally, we use this two results to recover the running cost or terminal cost.
	%			
	
	Throughout the current section, if $f$ is a function defined on $\mathbb{T}^n$, we still use $f$ to denote the function obtained by extending $f$ to $\mathbb{R}^n$ periodically. 
	
	\subsection{Higher-order linearization}\label{HLM}
	This method depends on the infinite differentiability of the solution with respect to a given input $m_0(x)$, which was derived in Theorem~$\ref{local_wellpose}$.
	In fact, Cardaliaguet, Delarue, Lasry and Lions developed this linearization method in some probability measure space; see \cite{Book11}. However, the setup of our study is not completely covered by the discussion in \cite{Book11} and for completeness and self-containedness, we show the process in what follows.
	
	First, we introduce the basic setting of this higher order
	linearization method. Consider the system $\eqref{main_equation}$. Let 
	$$m_0(x;\varepsilon)=\sum_{l=1}^{N}\varepsilon_lf_l,$$
	where $f_l\in C_+^{2+\alpha}(\mathbb{T}^n)$ and $\varepsilon=(\varepsilon_1,\varepsilon_2,...,\varepsilon_N)\in\mathbb{R}_+^N$ ($\mathbb{R}_+:=\{x\in\mathbb{R}: x\geq0\}$) with 
		$|\varepsilon|=\sum_{l=1}^{N}|\varepsilon_l|$ small enough. Then $m_0\in C^{2+\alpha}(\mathbb{T}^n)\cap\mathcal{O}_a$. By Theorem $\ref{local_wellpose}$, there exists a unique solution $(u(x,t;\varepsilon),m(x,t;\varepsilon) )$ of $\eqref{main_equation}$. Let $(u(x,t;0),m(x,t;0) ) $ be the solution of $\eqref{main_equation}$ when $\varepsilon=0.$
	
	Let
	$$u^{(1)}:=\p_{\varepsilon_1}u|_{\varepsilon=0}=\lim\limits_{\varepsilon\to 0}\frac{u(x,t;\varepsilon)-u(x,t;0) }{\varepsilon_1},$$
	$$m^{(1)}:=\p_{\varepsilon_1}m|_{\varepsilon=0}=\lim\limits_{\varepsilon\to 0}\frac{m(x,t;\varepsilon)-m(x,t;0) }{\varepsilon_1}.$$
	
	The idea is that we consider a new system of $(u^{(1)},m^{(1)}).$ If  $F\in\mathcal{A}$, $g\in\mathcal{B}$, we have 
	\[
	(u(x,t;0),m(x,t;0) )=(0,0) 
	\] 
	and hence
	\begin{align*}
		&-\p_tu^{(1)}(x,t)-\Delta u^{(1)}(x,t)\\
		=& \lim\limits_{\varepsilon\to 0}\frac{1}{\varepsilon_1}[\frac{|\nabla u(x,t;\varepsilon)|^2-|\nabla u(x,t;0)|^2}{2}+ F(x,t,m(x,t;\varepsilon))-F(x,t;m(x,t;0)) ]\\
		=&\nabla u^{(1)}\cdot  (\lim\limits_{\varepsilon\to 0}\frac{\nabla u(x,t;\varepsilon)+\nabla u(x,t;0)}{2})+ \lim\limits_{\varepsilon\to 0}\frac{1}{\varepsilon_1}[ F^{(1)}(x,t)(m(x,t;\varepsilon)-m(x,t;0))]\\
		=&F^{(1)}(x,t)m^{(1)}(x,t).
	\end{align*}
	
	Similarly, we can compute
	\begin{align*}
		&\p_t m^{(1)}(x,t)-\Delta m^{(1)}(x,t)\\
		=&\lim\limits_{\varepsilon\to 0}\frac{1}{\varepsilon_1}[{\rm div} ( m(x,t;\varepsilon)\nabla u(x,t;\varepsilon)-m(x,t;0)\nabla u(x,t;0) )]\\
		=&\lim\limits_{\varepsilon\to 0}\frac{1}{\varepsilon_1} [ \nabla m(x,t;\varepsilon)\cdot\nabla u(x,t;\varepsilon)+m(x,t;\varepsilon)\Delta u(x,t;\varepsilon) -\\
		& \nabla m(x,t;0)\cdot\nabla u(x,t;0)-m(x,t;0)\Delta u(x,t;0)           ]\\
		=&0.
	\end{align*}
	Now, we have  that $(u_{j}^{(1)},m_{j}^{(1)} )$ satisfies the following system:
	\begin{equation}\label{linear l=1,eg}
		\begin{cases}
			-\p_tu^{(1)}(x,t)-\Delta u^{(1)}(x,t)= F^{(1)}(x,t)m^{(1)}(x,t),& \text{ in } \mathbb{T}^n\times (0,T),\medskip\\
			\p_t m^{(1)}(x,t)-\Delta m^{(1)}(x,t)=0,&\text{ in }\mathbb{T}^n\times (0,T),\medskip\\
			u^{(1)}_j(x,T)=G^{(1)}(x)m^{(1)}(x,T), & \text{ in } \mathbb{T}^n,\medskip\\
			m^{(1)}_j(x,0)=f_1(x). & \text{ in } \mathbb{T}^n.\\
		\end{cases}  
	\end{equation}
	Then we can define $$u^{(l)}:=\p_{\varepsilon_l}u|_{\varepsilon=0}=\lim\limits_{\varepsilon\to 0}\frac{u(x,t;\varepsilon)-u(x,t;0) }{\varepsilon_l},$$
	$$m^{(l)}:=\p_{\varepsilon_l}m|_{\varepsilon=0}=\lim\limits_{\varepsilon\to 0}\frac{m(x,t;\varepsilon)-m(x,t;0) }{\varepsilon_l},$$
	for all $l\in\mathbb{N}$ and obtain a sequence of similar systems.
	In the proof of Theorem $\ref{der g}$ in what follows, we recover the first Taylor coefficient of $F$ or $G$ by considering this new system $\eqref{linear l=1,eg}$. In order to recover the higher order Taylor coefficients, 
	we consider 
	\begin{equation}\label{eq:ht1}
		u^{(1,2)}:=\p_{\varepsilon_1}\p_{\varepsilon_2}u|_{\varepsilon=0},
		m^{(1,2)}:=\p_{\varepsilon_1}\p_{\varepsilon_2}m|_{\varepsilon=0}.
	\end{equation}
	
	By direct calculations, we have from \eqref{eq:ht1} that 
	\begin{equation}\label{eq:ht2}
		\begin{split}
			&-\p_tu^{(1,2)}(x,t)-\Delta u^{(1,2)}(x,t)\\
			=& -\nabla u^{(1)}\cdot \nabla u^{(2)}-\nabla u^{(1,2)}\cdot \nabla u(x,t;0) \\
			& +F_j^{(1)}m^{(1,2)}+F^{(2)}_j(x,t)m^{(1)}m^{(2)},
		\end{split}
	\end{equation}
	and
	\begin{equation}\label{eq:ht3}
		\begin{split}
			&\p_t m^{(1,2)}(x,t)-\Delta m^{(1,2)}(x,t)\\
			=& \p_{\varepsilon_1}\p_{\varepsilon_2}{\rm div} (m\nabla u)|_{\varepsilon=0}\medskip\\
			=&\nabla m^{(1,2)}\nabla u(x,t;0)+\nabla m(x,t;0)\nabla u^{(1,2)}+m^{(1,2)}\Delta u(x,t;0)+m(x,t;0)\Delta u^{(1,2)}\medskip\\
			&+ {\rm div} (m^{(1)}\nabla u^{(2)})+{\rm div}(m^{(2)}\nabla u^{(1)})\medskip\\
			=& {\rm div} (m^{(1)}\nabla u^{(2)})+{\rm div}(m^{(2)}\nabla u^{(1)}).
		\end{split}
	\end{equation}
	
	Combining \eqref{eq:ht2} and \eqref{eq:ht3}, we have the second order linearization as follows:
	\begin{equation}\label{linear l=1,2 eg}
		\begin{cases}
			-\p_tu^{(1,2)}-\Delta u^{(1,2)}(x,t)+\nabla u^{(1)}\cdot \nabla u^{(2)}\medskip\\
			\hspace*{3cm}= F^{(1)}(x,t)m^{(1,2)}+F^{(2)}(x,t)m^{(1)}m^{(2)},& \text{ in } \mathbb{T}^n\times(0,T),\medskip\\
			\p_t m^{(1,2)}-\Delta m^{(1,2)}= {\rm div} (m^{(1)}\nabla u^{(2)})+{\rm div}(m^{(2)}\nabla u^{(1)}) ,&\text{ in } \mathbb{T}^n\times (0,T),\medskip\\
			u^{(1,2)}(x,T)=G^{(1)}(x)m^{(1,2)}(x,T)+G^{(2)}(x)m^{(1)}m^{(2)}(x,T), & \text{ in } \mathbb{T}^n,\medskip\\
			m^{(1,2)}(x,0)=0, & \text{ in } \mathbb{T}^n.\\
		\end{cases}  	
	\end{equation}
	Notice that the non-linear terms of the system $\eqref{linear l=1,2 eg}$ depend on the first order linearised system $\eqref{linear l=1,eg}$. This shall be an important ingredient in the proof of Theorem $\ref{der g}$ in what follows. 
	
	Inductively, for $N\in\mathbb{N}$, we consider 
	\begin{equation*}
		u^{(1,2...,N)}=\p_{\varepsilon_1}\p_{\varepsilon_2}...\p_{\varepsilon_N}u|_{\varepsilon=0},
	\end{equation*}
	\begin{equation*}
		m^{(1,2...,N)}=\p_{\varepsilon_1}\p_{\varepsilon_2}...\p_{\varepsilon_N}m|_{\varepsilon=0}.
	\end{equation*}
	we can obtain a sequence of parabolic systems, which shall be employed again in determining the higher order Taylor coefficients of the unknowns $F$ and $G$.

	\subsection{Unique determination of single unknown function}
	
	We first present the proof of Theorem $\ref{der g}$.
	
	\begin{proof}[Proof of Theorem $\ref{der g}$]
		Consider the following systems for $j=1,2$:
		\begin{equation}\label{j=1,2for g}
			\begin{cases}
				-\p_tu_j(x,t)-\Delta u_j(x,t)+\frac 1 2 {|\nabla u_j|^2}= F(x,t,m_j(x,t)),& \text{ in }\mathbb{T}^n\times (0,T),\medskip\\
				\p_t m_j(x,t)-\Delta m_j(x,t)-{\rm div}(m_j(x,t)\nabla u_j(x,t))=0,&\text{ in }\mathbb{T}^n\times(0,T),\medskip\\
				u_j(x,T)=G_j(x,m_j(x,T)), & \text{ in } \mathbb{T}^n,\medskip\\
				m_j(x,0)=m_0(x), & \text{ in } \mathbb{T}^n.\\
			\end{cases}  		
		\end{equation}
		By the successive linearization procedure,  we first consider the case $N=1.$ Let
		$$u_{j}^{(1)}:=\p_{\varepsilon_1}u_{j}|_{\varepsilon=0},\quad m_{j}^{(1)}:=\p_{\varepsilon_1}m_{j}|_{\varepsilon=0}.$$
		Direct computations show that $(u_{j}^{(1)},v_{j}^{(1)} )$ satisfies the following system
		\begin{equation}\label{linear l=1for g}
			\begin{cases}
				-\p_tu_j^{(1)}(x,t)-\Delta u^{(1)}_j(x,t)= F^{(1)}(x,t)m_j^{(1)}(x,t),& \text{ in }\mathbb{T}^n\times(0,T),\medskip\\
				\p_t m^{(1)}_j(x,t)-\Delta m^{(1)}_j(x,t)=0,&\text{ in }\mathbb{T}^n\times(0,T),\medskip\\
				u^{(1)}_j(x,T)=G_j^{(1)}(x)m^{(1)}_j(x,T), & \text{ in } \mathbb{T}^n,\medskip\\
				m^{(1)}_j(x,0)=f_1(x), & \text{ in } \mathbb{T}^n.
			\end{cases}  	
		\end{equation}	
		We can solve the system \eqref{linear l=1for g} by first deriving $m^{(1)}_j$ and then obtaining $u^{(1)}_j.$ In doing so, we can obtain that the solution is
		$$ m_j^{(1)}(x,t)= \int_{\mathbb{R}^n}\Phi(x-y,t)f_{1}(y)\, dy,$$
		\begin{equation*}
			\begin{aligned}
				u_j^{(1)}(x,t)&=  \int_{\mathbb{R}^n}\Phi(x-y,T-t)G_j^{(1)}(y)m^{(1)}_j(y,T) )\, dy\\
				&+\int_{0}^{T-t}\int_{\mathbb{R}^n}\Phi(x-y,T-t-s)F^{(1)}(y,T-s)\overline{m}_j^{(1)}(y,s)\, dyds,
			\end{aligned}
		\end{equation*}
		where $\overline{m}_j^{(1)}(x,t)= m_j^{(1)}(x,T-t)$ and $\Phi$ is the  fundamental solution of the heat equation:
		\begin{equation}\label{eq:fund1}
			\Phi(x,t)= \frac{1}{(4\pi t)^{n/2}}e^{-\frac{|x|^2}{4t}}.
		\end{equation}
		
		Since  $\mathcal{M}_{G_1}=\mathcal{M}_{G_2}$, we have $$ u_1^{(1)}(x,0)=u_2^{(1)}(x,0),$$ for all $f_1\in C_+^{2+\alpha}(\mathbb{T}^n).$ This implies that
		$$ \int_{\mathbb{R}^n}\Phi(x-y,T)[G_1^{(1)}(y)m_1^{(1)}(y,T))-G_2^{(1)}(y)m_2^{(1)}(y,T)) ]\, dy=0.$$
		Noticing that $m_1^{(1)}(x,t)=m_2^{(1)}(x,t)$,  we  choose 
			$$m_1^{(1)}(x,T)=m_2^{(1)}(x,T)=\exp(-4\pi^2|\boldsymbol{\zeta}|^2T-2\pi \mathrm{i} \boldsymbol{\zeta}\cdot x)+M,$$ 
			where $\boldsymbol{\zeta}\in\mathbb{Z}^n, M\in\mathbb{N}.$ ( In this case, $f_1(x)\in C_+^{2+\alpha}(\mathbb{T}^n)$)
		
		By taking $M=1$ and $M=2$, respectively and then subtracting the resulting equations from one another, one can readily show that
		\begin{equation}
			\int_{\mathbb{R}^n}\Phi(x-y,T)[(G_1^{(1)}(y)-G_2^{(1)}(y))\exp(-2\pi \mathrm{i} \boldsymbol{\zeta}\cdot y)         ]\, dy=0,
		\end{equation}
		for all $\boldsymbol{\zeta}\in\mathbb{Z}^n.$
		Therefore $G_1^{(1)}(x)=G_2^{(1)}(x).$

		We proceed to consider the case $N=2.$ Let
		$$u_{j}^{(1,2)}:=\p_{\varepsilon_1}\p_{\varepsilon_2}u_{j}|_{\varepsilon=0},\quad
		m_{j}^{(1,2)}:=\p_{\varepsilon_1}\p_{\varepsilon_2}m_{j}|_{\varepsilon=0},$$
		and
		$$u_{j}^{(2)}:=\p_{\varepsilon_2}u_{j}|_{\varepsilon=0},\quad m_{j}^{(2)}:=\p_{\varepsilon_2}m_{j}|_{\varepsilon=0}.$$
		Then we can deal with the second-order linearization: 
		\begin{equation}\label{linear l=1,2}
			\begin{cases}
				-\p_tu_j^{(1,2)}(x,t)-\Delta u^{(1,2)}_j(x,t)+\nabla u_{j}^{(1)}\cdot \nabla u_{j}^{(2)}\\
				\hspace*{3cm} = F^{(1)}m_j^{(1,2)}+F^{(2)}(x,t)m_j^{(1)}m_j^{(2)},& \text{ in }\mathbb{T}^n\times (0,T),\\
				\p_t m^{(1,2)}_j(x,t)-\Delta m^{(1,2)}_j(x,t)\medskip \\
				\hspace*{3cm} = {\rm div} (m_{j}^{(1)}\nabla u_j^{(2)})+{\rm div}(m_j^{(2)}\nabla u_j^{(1)}) ,&\text{ in }\mathbb{T}^n\times(0,T),\medskip\\
				u^{(1,2)}_j(x,T)=G_j^{(1)}(x)m_j^{(1,2)}(x,T)+G_j^{(2)}(x)m_j^{(1)}m_j^{(2)}(x,T), & \text{ in } \mathbb{T}^n,\medskip\\
				m^{(1,2)}_j(x,0)=0, & \text{ in } \mathbb{T}^n.\\
			\end{cases}  	
		\end{equation}
		Since we have shown that $G_1^{(1)}(x)=G_2^{(1)}(x)$, we have 
		$$ u^{(1)}_1(x,t)= u^{(1)}_2(x,t), \, m^{(1)}_1(x,t)= m^{(1)}_2(x,t)$$
		by solving equation $\eqref{linear l=1for g}$.
		
		Then by the same argument in the case $N=1$ (considering $m_0=\varepsilon_2f_2$ ), we have
		
		$$u^{(2)}_1(x,t)=u^{(2)}_2(x,t), \, m^{(2)}_1(x,t)= m^{(2)}_2(x,t).$$
		
		Denote 
		\[
		p(x,t)={\rm div} (m_{j}^{(1)}\nabla u_j^{(2)})+{\rm div}(m_j^{(2)}\nabla u_j^{(1)}),\ \ q(x,t)= -\nabla u_{j}^{(1)}\cdot \nabla u_{j}^{(2)}.
		\] 
		Then we can also solve  system \eqref{linear l=1,2} as follows:
		\begin{equation*}
			m^{(1,2)}_j(x,t)=\int_{0}^{t} \int_{\mathbb{R}^n} \Phi(x-y,t-s)p(y,s)\, dyds,
		\end{equation*}
		\begin{equation*}
			\begin{aligned}
				u_j^{(1,2)}(x,t)= &\int_{\mathbb{R}^n}\Phi(x-y,T-t) [G_j^{(1)}(x)m_j^{(1,2)}(x,T)+G_j^{(2)}(x)m_j^{(1)}m_j^{(2)}(x,T) ]\, dy\\
				+&\int_{0}^{T-t}\int_{\mathbb{R}^n}\Phi(x-y,T-t-s)(F^{(2)}(y,T-s)m_j^{(1)}m_j^{(2)}(y,T-s) -\overline{q}(y,s))\, dyds,
			\end{aligned}
		\end{equation*}
		where $\overline{q}(y,s)=q(y,T-s).$
		Since  $$u_1^{(1,2)}(x,0)= u_2^{(1,2)}(x,0),$$ we have	$$ \int_{\mathbb{R}^n}\Phi(x-y,T)[G_1^{(2)}(y)m_1^{(1)}(y,T)-G_2^{(1)}(y)m_2^{(1)}(y,T) ]\, dy=0.$$

		Next, by a similar argument in the case $N=1$, we can prove that $G^{(2)}_1(x)=G^{(2)}_2(x). $ 
		Finally, by the mathematical induction, we can show the same result for $N\geq 3$. That is, for any $k\in\mathbb{N},$ we have $G^{(k)}_1(x)=G^{(k)}_2(x).$ 
		The proof is complete. 
	\end{proof}
	
	\subsection{Simultaneous recovery results for inverse problems}
	In this section, we aim to determinate $F$ and $G$ simultaneously. To that end, we first derive an auxiliary lemma as follows. 
	
		\begin{lem}\label{dense}
			Let $u$ be a solution of the heat equation 
			\begin{equation}\label{per heat}
				\begin{cases}
					\p_t u(x,t)-\Delta u(x,t)=0    &\text{ in } \mathbb{T}^n\times(0,T),\\
					u(x,0)=u_0(x)                  &\text{ in } \mathbb{T}^n.
				\end{cases}
			\end{equation}
			Let $f(x)\in C^{2+\alpha}(\mathbb{T}^n)$ for some $\alpha\in(0,1)$. Suppose 
			\begin{equation}\label{fuv=0}
				\int_{\mathbb{T}^n\times(0,T)} f(x)u(x,t)\, dxdt=0,	
			\end{equation}
			for all $u_0\in C_+^{\infty}(\mathbb{T}^n)$. Then one has $f=0.$
	\end{lem}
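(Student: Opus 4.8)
The plan is to test the vanishing hypothesis \eqref{fuv=0} against the explicit separated-variable solutions of the heat equation on the torus. For each $\boldsymbol{\xi}\in\mathbb{Z}^n$ I would set
\[
u_{\boldsymbol{\xi}}(x,t):=e^{-4\pi^2|\boldsymbol{\xi}|^2 t}\,e^{2\pi\mathrm{i}\,\boldsymbol{\xi}\cdot x}.
\]
Then $u_{\boldsymbol{\xi}}\in C^{\infty}(\mathbb{T}^n\times(0,T))$, and a one-line computation gives $\partial_t u_{\boldsymbol{\xi}}=-4\pi^2|\boldsymbol{\xi}|^2 u_{\boldsymbol{\xi}}=\Delta u_{\boldsymbol{\xi}}$, so $u_{\boldsymbol{\xi}}$ solves \eqref{per heat}. (If real-valued test solutions are preferred, one takes instead the real and imaginary parts of $u_{\boldsymbol{\xi}}$, which are again smooth solutions of \eqref{per heat}; nothing below changes.)

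Substituting $u=u_{\boldsymbol{\xi}}$ into \eqref{fuv=0} and applying Fubini's theorem, the space and time integrals factorise:
\[
\left(\int_0^T e^{-4\pi^2|\boldsymbol{\xi}|^2 t}\,dt\right)\left(\int_{\mathbb{T}^n} f(x)\,e^{2\pi\mathrm{i}\,\boldsymbol{\xi}\cdot x}\,dx\right)=0.
\]
The first factor equals $T$ when $\boldsymbol{\xi}=0$ and $\big(1-e^{-4\pi^2|\boldsymbol{\xi}|^2 T}\big)\big/\big(4\pi^2|\boldsymbol{\xi}|^2\big)$ otherwise, and in every case it is strictly positive. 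Hence the second factor must vanish, i.e.\ $\hat{f}(-\boldsymbol{\xi})=0$ in the notation of \eqref{eq:fourier}. Since $\boldsymbol{\xi}\in\mathbb{Z}^n$ was arbitrary, every periodic Fourier coefficient of $f$ vanishes, and because $f$ is continuous on $\mathbb{T}^n$ (indeed $f\in C^{2+\alpha}$, far more regularity than needed), Fourier inversion yields $f\equiv 0$.

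I do not expect a genuine analytic obstacle here: the whole content of the lemma is the observation that the \emph{countable} family $\{u_{\boldsymbol{\xi}}\}_{\boldsymbol{\xi}\in\mathbb{Z}^n}$ of exponential heat solutions already suffices, so that \eqref{fuv=0} forces each Fourier coefficient of $f$ to vanish. The only point requiring a (trivial) check is the strict positivity of the time factor $\int_0^T e^{-4\pi^2|\boldsymbol{\xi}|^2 t}\,dt$, which is what precludes any cancellation between the two factors. One could instead phrase the argument abstractly — $f\in L^2(\mathbb{T}^n\times(0,T))$ is orthogonal to the linear span of the heat solutions, and that span is dense — but the direct computation above is the cleanest route and the one I would write up.
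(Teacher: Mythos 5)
Your proof is correct and is essentially identical to the paper's own argument: both test \eqref{fuv=0} against the exponential heat solutions $e^{-4\pi^2|\boldsymbol{\xi}|^2 t}e^{\pm 2\pi\mathrm{i}\boldsymbol{\xi}\cdot x}$, observe that the time factor is strictly positive, and conclude that every Fourier coefficient of $f$ vanishes. Your explicit handling of the $\boldsymbol{\xi}=0$ case is a minor point of extra care that the paper glosses over.
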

	\begin{proof}
		Let  $\boldsymbol {\xi}\in\mathbb{Z}^n$ and $M\in\mathbb{N}$. It is directly verified that
		$$
		u(x,t)=\exp(- 2\pi\mathrm{i}\boldsymbol {\xi}\cdot x-4\pi^2|\boldsymbol {\xi}|^2t )+M, \quad  \mathrm{i}:=\sqrt{-1},
		$$
		is a solution of $\eqref{per heat}$ with initial value 
		$$u_0(x)= \exp(- 2\pi\mathrm{i}\boldsymbol {\xi}\cdot x)+M\geq0.$$
		Then $\eqref{fuv=0}$ implies that
		$$\int_{\mathbb{T}^n}  \frac{1-\exp(-4\pi^2|\boldsymbol {\xi}|^2T)}{4\pi^2|\boldsymbol {\xi}|^2}f(x)e^{-2\pi \mathrm{i}\boldsymbol {\xi}\cdot x } dx+MT\int_{\mathbb{T}^n}f(x)dx=0.$$
		By taking $M=1$ and $M=2$,respectively, we have 
		$$ \int_{\mathbb{T}^n}f(x)e^{-2\pi \mathrm{i}\boldsymbol {\xi}\cdot x } dx=0.$$
		Hence, the Fourier series of $f(x)$ is $0$. Since $f(x)\in C^{2+\alpha}(\mathbb{T}^n)$, its Fourier series converges to $f(x)$ uniformly.
		Therefore, $f(x)=0.$
	\end{proof}
	
	We are now in a position to present the proof of Theorem $\ref{der F,g}$.
	\begin{proof}[Proof of Thoerem $\ref{der F,g}$]
		Consider the following systems 
		\begin{equation}\label{j=1,2for Fg}
			\begin{cases}
				-\p_tu_j(x,t)-\Delta u_j(x,t)+\frac 1 2 {|\nabla u_j|^2}= F_j(x,m_j(x,t)),& \text{ in }\mathbb{T}^n\times(0,T),\medskip\\
				\p_t m_j(x,t)-\Delta m_j(x,t)-{\rm div}(m_j(x,t)\nabla u_j(x,t))=0,&\text{ in }\mathbb{T}^n\times(0,T),\medskip\\
				u_j(x,T)=G_j(x,m_j(x,T)), & \text{ in } \mathbb{T}^n,\medskip\\
				m_j(x,0)=m_0(x), & \text{ in } \mathbb{T}^n.
			\end{cases}  		
		\end{equation}
		Following a similar method we used in the proof of Theorem $\ref{der g}$, we let 
		$$m_0(x;\varepsilon)=\sum_{l=1}^{N}\varepsilon_lf_l,$$
		where $f_l\in C_+^{2+\alpha}(\mathbb{T}^n)$ and $\varepsilon=(\varepsilon_1,\varepsilon_2,...,\varepsilon_N)\in\mathbb{R}_+^N$ with 
		$|\varepsilon|=\sum_{l=1}^{N}|\varepsilon_l|$ small enough. 
		
		Consider the case $N=1.$ Let
		$$u_{j}^{(1)}:=\p_{\varepsilon_1}u_{j}|_{\varepsilon=0},$$
		$$m_{j}^{(1)}:=\p_{\varepsilon_1}m_{j}|_{\varepsilon=0}.$$
		Then direct computations imply that $(u_{j}^{(1)},v_{j}^{(1)} )$ satisfies the following system:
		\begin{equation}\label{linear l=1for F g}
			\begin{cases}
				-\p_tu_j^{(1)}(x,t)-\Delta u^{(1)}_j(x,t)= F_j^{(1)}(x)m_j^{(1)}(x,t),& \text{ in }\mathbb{T}^n\times(0,T),\medskip \\
				\p_t m^{(1)}_j(x,t)-\Delta m^{(1)}_j(x,t)=0,&\text{ in }\mathbb{T}^n\times (0,T),\medskip \\
				u^{(1)}_j(x,T)=G_j^{(1)}(x)m^{(1)}_j(x,T), & \text{ in } \mathbb{T}^n,\medskip\\
				m^{(1)}_j(x,0)=f_1(x), & \text{ in } \mathbb{T}^n.
			\end{cases}  	
		\end{equation}
		Then we have $ m_1^{(1)}=m_2^{(1)}:=m^{(1)}(x,t)$	.
		Let $ \overline{u}=u_1^{(1)}-u_2^{(1)}$, $\eqref{linear l=1for F g}$ implies that 
		\begin{equation}\label{u1-u2 }
			\begin{cases}
				&-\p_t\overline{u}-\Delta\overline{u}= (F_1^{(1)}-F_2^{(1)})m^{(1)}(x,t),\medskip\\
				&\overline{u}(x,T)=(G_1^{(1)}-G_2^{(1)})m^{(1)}(x,T).
			\end{cases}
		\end{equation}
		Now let $w$ be a solution to the heat equation $\p_t w(x,t)-\Delta w(x,t)=0$ in $\mathbb{T}^n$. Then
		\begin{equation}
			\begin{aligned}
				&\int_Q (F_1^{(1)}-F_2^{(1)})m^{(1)}(x,t)w\, dxdt\medskip\\
				=&\int_Q (-\p_t\overline{u}-\Delta\overline{u})w\, dxdt\medskip\\
				=&\int_{\mathbb{T}^n} (\overline{u}w)\big|_0^T\, dx +\int_Q \overline{u}\p_tw- \overline{u}\Delta w\medskip\\
				=&  \int_{\mathbb{T}^n} (\overline{u}w)\big|_0^T\, dx.
			\end{aligned}	
		\end{equation}
		
		Since $\mathcal{M}_{F_1,G_1}=\mathcal{M}_{F_2,G_2}$, we have $$\overline{u}(x,0)=0.$$ It follows that
		\begin{equation}\label{integral by part}
			\int_Q (F_1^{(1)}-F_2^{(1)})m^{(1)}(x,t)w(x,t)\, dxdt= \int_{\mathbb{T}^n} w(x,T)(G_1^{(1)}-G_2^{(1)})m^{(1)}(x,T)\, dx,
		\end{equation}
		for all solutions $w(x,t),m^{(1)}(x,t)$ of the heat equation in $\mathbb{T}^n$. 
		
		Here, we cannot apply Lemma $\ref{dense}$ directly. Nevertheless, we use the same construction.
		Let $\boldsymbol {\xi_1},\boldsymbol {\xi_2}\in\mathbb{Z}^n\backslash\{0\}$, $M\in\mathbb{N}^*$ and $\boldsymbol {\xi}=\boldsymbol {\xi_1}+\boldsymbol {\xi_2}$ .
		Let
		$$w(x,t)=\exp(- 2\pi\mathrm{i}\boldsymbol {\xi_1}\cdot x-4\pi^2|\boldsymbol {\xi_1}|^2t ),$$
		and
		
			$$m(x,t)=\exp(- 2\pi\mathrm{i}\boldsymbol {\xi_2}\cdot x-4\pi^2|\boldsymbol {\xi_2}|^2t )+M.$$
		Then the left hand side of $\eqref{integral by part}$ is
		\begin{equation}\label{Fourier1}
			\begin{aligned}
				&\int_{\mathbb{T}^n} \frac{1-\exp(-4\pi^2T(|\boldsymbol {\xi_1}|^2+|\boldsymbol {\xi_2}|^2))}{4\pi^2( |\boldsymbol {\xi_1}|^2+|\boldsymbol {\xi_2}|^2)}(F_1^{(1)}-F_2^{(1)} )e^{-2\pi \mathrm{i}\boldsymbol {\xi}\cdot x }\, dx\\
				+&M \frac{1-\exp(-4\pi^2T|\boldsymbol {\xi_1}|^2)}{4\pi^2|\boldsymbol {\xi_1}|^2}   \int_{\mathbb{T}^n} (F_1^{(1)}-F_2^{(1)} )e^{-2\pi \mathrm{i}\boldsymbol {\xi_1}\cdot x }\,dx.
			\end{aligned}
		\end{equation}
		And the right hand side is 
		\begin{equation}\label{Fourier2}
			\begin{aligned}
				&\int_{\mathbb{T}^n}\exp(-4\pi^2T(|\boldsymbol {\xi_1}|^2+|\boldsymbol {\xi_2}|^2)) (G_1^{(1)}-G_2^{(1)})e^{-2\pi \mathrm{i}\boldsymbol {\xi}\cdot x }\, dx\\
				+&M\exp(-4\pi^2T|\boldsymbol {\xi_1}|^2)\int_{\mathbb{T}^n} (G_1^{(1)}-G_2^{(1)})e^{-2\pi \mathrm{i}\boldsymbol {\xi_1}\cdot x }\, dx.
			\end{aligned}	
		\end{equation}

			By taking $M=1$ and $M=2$, respectively and then subtracting the resulting equations from one another, one can readily show that
			$$ \frac{1-\exp(-4\pi^2T|\boldsymbol {\xi_1}|^2)}{4\pi^2|\boldsymbol {\xi_1}|^2}   \int_{\mathbb{T}^n} (F_1^{(1)}-F_2^{(1)} )e^{-2\pi \mathrm{i}\boldsymbol {\xi_1}\cdot x }\,dx= \exp(-4\pi^2T|\boldsymbol {\xi_1}|^2)\int_{\mathbb{T}^n} (G_1^{(1)}-G_2^{(1)})e^{-2\pi \mathrm{i}\boldsymbol {\xi_1}\cdot x }\, dx. $$
		
		Then $\eqref{integral by part}$,$\eqref{Fourier1}$ and $\eqref{Fourier2}$ readily yields that
		$$\frac{1-\exp(-4\pi^2T(|\boldsymbol {\xi_1}|^2+|\boldsymbol {\xi_2}|^2))}{4\pi^2( |\boldsymbol {\xi_1}|^2+|\boldsymbol {\xi_2}|^2)}a_{\boldsymbol {\xi}  }+\exp(-4\pi^2T(|\boldsymbol {\xi_1}|^2+|\boldsymbol {\xi_2}|^2))b_{\boldsymbol {\xi}}=0.$$
		
		For a given $\boldsymbol {\xi}\in\mathbb{Z}^n $, there exist $\boldsymbol {\xi_1},\boldsymbol {\xi_2},\boldsymbol {\xi_1}',\boldsymbol {\xi_2}' \in\mathbb{Z}^n\backslash\{0\}$ such that $\boldsymbol {\xi}=\boldsymbol {\xi_1}+\boldsymbol {\xi_2}=\boldsymbol {\xi_1}'+\boldsymbol {\xi_2}'$ and $|\boldsymbol {\xi_1}|^2+|\boldsymbol {\xi_2}|^2\neq |\boldsymbol {\xi_1}'|^2+|\boldsymbol {\xi_2}'|^2 .$ Therefore, $a_{\boldsymbol {\xi}}=b_{\boldsymbol {\xi}}=0$ for all $\boldsymbol {\xi}\in\mathbb{Z}^n$. 
		Notice that 
		
		It  follows that $F_1^{(1)}-F_2^{(1)}=G_1^{(1)}-G_2^{(1)}=0$.

		Next, we consider the case $N=2.$ Let
		\begin{equation}\label{eq:ss1}
			u_{j}^{(1,2)}:=\p_{\varepsilon_1}\p_{\varepsilon_2}u_{j}|_{\varepsilon=0},\quad
			m_{j}^{(1,2)}:=\p_{\varepsilon_1}\p_{\varepsilon_2}m_{j}|_{\varepsilon=0},
		\end{equation}
		and
		\begin{equation}\label{eq:ss2}
			u_{j}^{(2)}:=\p_{\varepsilon_2}u_{j}|_{\varepsilon=0},\quad m_{j}^{(2)}:=\p_{\varepsilon_2}m_{j}|_{\varepsilon=0}.
		\end{equation}
		By the second-order linearization in \eqref{eq:ss1} and \eqref{eq:ss2},  we can obtain
		\begin{equation}
			\begin{cases}
				-\p_tu_j^{(1,2)}(x,t)-\Delta u^{(1,2)}_j(x,t)+\nabla u_{j}^{(1)}\cdot \nabla u_{j}^{(2)}\\
				\hspace*{3cm} = F_j^{(1)}m_j^{(1,2)}+F^{(2)}_j(x)m_j^{(1)}m_j^{(2)},& \text{ in }\mathbb{T}^n\times(0,T),\medskip\\
				\p_t m^{(1,2)}_j(x,t)-\Delta m^{(1,2)}_j(x,t)\\
				\hspace*{3cm}=  {\rm div} (m_{j}^{(1)}\nabla u_j^{(2)})+{\rm div}(m_j^{(2)}\nabla u_j^{(1)}) ,&\text{ in }\mathbb{T}^n\times (0,T),\medskip\\
				u^{(1,2)}_j(x,T)=G^{(1)}(x)m_j^{(1,2)}(x,T)+G^{(2)}(x)m_j^{(1)}m_j^{(2)}(x,T), & \text{ in } \mathbb{T}^n,\medskip\\
				m^{(1,2)}_j(x,0)=0, & \text{ in } \mathbb{T}^n.
			\end{cases}  	
		\end{equation}
		By following a similar argument in the case $N=1$ , we have 
		$$ u^{(1)}_1(x,t)= u^{(1)}_2(x,t),  u^{(2)}_1(x,t)=u^{(2)}_2(x,t),$$
		and
		$$ m^{(1)}_1(x,t)= m^{(1)}_2(x,t) ,  m^{(2)}_1(x,t)= m^{(2)}_2(x,t).$$
		
		Let $\overline{u}^2(x,t)=u_1^{(1,2)}(x,t)-u_2^{(1,2)}(x,t) $. We have
		\begin{equation}\label{u1-u2,2 }
			\begin{cases}
				&-\p_t\overline{u}^2-\Delta\overline{u}^2= (F_1^{(1)}-F_2^{(1)})m^{(1)}(x,t)m_1^{(2)}(x,t),\medskip\\
				&\overline{u}(x,T)=(G_1^{(1)}-G_2^{(1)})m^{(1)}(x,T)m_1^{(2)}(x,t).
			\end{cases}
		\end{equation}
		Let $w$ be a solution of the heat equation $\p_t w(x,t)-\Delta w(x,t)=0$ in $\mathbb{T}^n$. Then by following a similar argument in the case $N=1$, we can show that 
		\begin{equation}\label{integral by part2}
			\begin{split}
				&	\int_Q (F_1^{(2)}-F_2^{(2)})m^{(1)}m_1^{(2)}w(x,t)\, dxdt\\
				=& \int_{\mathbb{T}^n} w(x,T)(G_1^{(2)}-G_2^{(2)})m^{(1)}(x,T)m_1^{(2)}(x,T)\, dx.
			\end{split} 
		\end{equation}
		To proceed further, by using the construction in Lemma $\ref{dense}$ again, we have from \eqref{integral by part2} that
		\[
		F_1^{(2)}-F_2^{(2)}=G_1^{(2)}-G_2^{(2)}=0.
		\]
		
		Finally, via a mathematical induction, we can derive the same result for $N\geq 3$. That is, for any $k\in\mathbb{N},$ we have $$F^{(k)}_1(x)-F^{(k)}_2(x)=G^{(k)}_1(x)-G^{(k)}_2(x)=0.$$ Hence, 	
		$$(F_1(x,z),F_2(x,z))=(G_1(x,z),G_2(x,z)),\text{  in  } \mathbb{R}^n\times \mathbb{R}.$$ 
		
		The proof is complete. 
	\end{proof}
	
	\begin{rmk}
		Theorem  $\ref{der F,g}$ is not strictly stronger than Theorem $\ref{der g}$. We need $F(x,z)$ is independ of $t$ in the proof of  Theorem  $\ref{der F,g}$ but we do not need this condition in the proof of  Theorem $\ref{der g}$.
	\end{rmk}
		\begin{rmk}
			In the proof of Theorem $\ref{der F,g}$, we arrived at a decoupled system after applying the linearization technique. However, we cannot simply apply existing results in inverse problems for a single parabolic equation. In fact, for a single parabolic equation, it is impossible to determine the source term $f$ by the corresponding boundary measurement. For a simple illustration, we let $h(x)\in C_0^\infty(Q)$, and consider the following two parabolic equations for a given $f\in C(Q)$, 
			\[
			\partial_t u-\Delta u=f \quad \mbox{and}\quad \partial_t\widetilde{u}-\Delta\widetilde{u}=\widetilde{f},\quad \widetilde{f}:=f+(\partial_t h-\Delta h). 
			\] 
			It can be directly verified that $u$ and $\widetilde{u}$ possess the same boundary data, though $f\equiv\widetilde f$ in general. Hence, the proof of Theorem $\ref{der g}$ makes advantageous use on the peculiar structures of the MFG system. The same fact holds for the proofs of Theorems $\ref{der F 2}$ and $\ref{der g2}$ in what follows. 
	\end{rmk}
	\section{Inverse Problems for MFGs with General Lagrangians}
	In the previous sections, we established the unique identifiability results for the inverse problems by assuming that the Hamiltonian involved is of a quadratic form, which represents a kinetic energy. In this section, we show that one can extend a large part of the previous results to the case with general Lagrangians if $F$ is independent of $t$.

	In what follows, we let $T>0$ and $n\in\mathbb{N}$ and consider the following system of nonlinear PDEs : 
	\begin{equation}\label{general H}
		\begin{cases}
			-\p_tu(x,t)-\Delta u(x,t)+ H(x,\nabla u)= F(x,m(x,t)),& \text{ in }\mathbb{T}^n\times (0,T),\medskip\\
			\p_t m(x,t)-\Delta m(x,t)-{\rm div}(m(x,t) H_p (x,\nabla u))=0,&\text{ in }\mathbb{T}^n\times (0,T),\medskip \\
			u(x,T)=G(x,m_T), & \text{ in } \mathbb{T}^n,\medskip \\
			m(x,0)=m_0(x), & \text{ in } \mathbb{T}^n.
		\end{cases}  	
	\end{equation}
	We study the inverse problem \eqref{eq:ip1}-\eqref{eq:ip2} associated with \eqref{general H}. In order to apply the method developed in the previous sections to this general case, we first introduce a new analytic class. 
	
	\begin{defi}
		Let $H(x,z_1,z_2,...,z_n)$ be a function mapping from $\mathbb{R}\times\mathbb{C}^n $ to $\mathbb{C}$. We say that $H$ is admissible and write $H \in \mathcal{I}$ if it fulfils the following conditions:
		\begin{enumerate}
			
			\item[(1)]~The map $(z_1,z_2,...,z_n)\to H(\cdot,z_1,z_2,...,z_n)$ is holomorphic with value in $C^{2+\alpha}(\mathbb{T}^n)$, $\alpha\in(0,1)$;
			
			\item[(2)] $H(x,0)=0, $ for all $x\in\mathbb{T}^n.$ 
		\end{enumerate}
		It is clear that $H$ can be expanded into a power series:
		\begin{equation}\label{eq:sss1}
			H(x,z)=\sum_{|\beta|=1}^{\infty} H^{(\beta)}(x)\frac{z^{\beta}}{k!},
		\end{equation}
		where $ H^{(\beta)}(x)\in C^{2+\alpha}(\mathbb{T}^n)$ and $\beta$ is a muti-index.
	\end{defi}
	
	Similar to our discussion in Remark~\ref{rem:1}, we always assume that the coefficient functions $H^{(\beta)}$ in \eqref{eq:sss1} are real-valued.  We first state the main theorems of the results for the inverse problems associated with \eqref{general H}. The corresponding proofs are given in Section $\ref{proof H}$.
	
	\begin{thm}\label{der F 2}
		Assume $F_j\in\mathcal{B}$ ($j=1,2$), $G\in\mathcal{B}$ and $H\in\mathcal{I}$. Let $\mathcal{M}_{F_j}$ be the measurement map associated to
		the following system ($j=1,2$):
		\begin{equation}
			\begin{cases}
				-\p_tu(x,t)-\Delta u(x,t)+ H(x,\nabla u)= F_j(x,m(x,t)),& \text{ in }\mathbb{T}^n\times (0,T),\medskip\\
				\p_t m(x,t)-\Delta m(x,t)-{\rm div}(m(x,t) H_p (x,\nabla u))=0,&\text{ in }\mathbb{T}^n\times (0,T),\medskip\\
				u(x,T)=G(x,m_T), & \text{ in } \mathbb{T}^n,\medskip\\
				m(x,0)=m_0(x), & \text{ in } \mathbb{T}^n.
			\end{cases}  	
		\end{equation}
		If for any $m_0\in C^{2+\alpha}(\mathbb{T}^n)\cap\mathcal{O}_a$,  one has 
		$$\mathcal{M}_{F_1}(m_0)=\mathcal{M}_{F_2}(m_0),$$
		then it holds that
		$$F_1(x,z)=F_2(x,z) \text{  in  } \mathbb{T}^n\times \mathbb{R}.$$ 
	\end{thm}
	\begin{thm}\label{der g2}
		Assume $F \in\mathcal{B}$, $G_j\in\mathcal{B}$ ($j=1,2$) and $H\in\mathcal{I}$. Let $\mathcal{M}_{G_j}$ be the measurement map associated to the following system ($j=1,2$): 
		\begin{equation}
			\begin{cases}
				-\p_tu(x,t)-\Delta u(x,t)+ H(x,\nabla u)= F(x,t,m(x,t)),& \text{ in }\mathbb{T}^n\times (0,T),\medskip\\
				\p_t m(x,t)-\Delta m(x,t)-{\rm div}(m(x,t) H_p (x,\nabla u))=0,&\text{ in } \mathbb{T}^n\times (0,T),\medskip\\
				u(x,T)=G_j(x,m_T), & \text{ in } \mathbb{T}^n,\medskip\\
				m(x,0)=m_0(x), & \text{ in } \mathbb{T}^n.
			\end{cases}  	
		\end{equation}
		If for any $m_0\in C^{2+\alpha}(\mathbb{T}^n)\cap\mathcal{O}_a$,  one has 
		$$\mathcal{M}_{G_1}(m_0)=\mathcal{M}_{G_2}(m_0),$$
		then it holds that
		$$G_1(x,z)=G_2(x,z) \text{  in  } \mathbb{T}^n\times \mathbb{R}.$$ 
	\end{thm}
	\subsection{Well-posedness of the general system}
	
	\begin{lem}\label{localwellpose2}
		Suppose  $F,G\in\mathcal{B}$ ,$H\in\mathcal{I}$. Then
		there exist $\delta>0$, $C>0$ such that for any $m_0\in B_{\delta}(\mathbb{T}^n) :=\{m_0\in C^{\alpha}(\mathbb{T}^n): \|m_0\|_{C^{2+\alpha}(\mathbb{T}^n)}\leq\delta \}$, the MFG system $\eqref{general H}$ has a solution $u = u_{m_0} \in
		C^{2+\alpha,1+\frac{\alpha}{2}}(Q)$ which satisfies
		\begin{equation}\label{eq:nn4}
			\|u\|_{C^{2+\alpha,1+\frac{\alpha}{2}}(Q}+ \|m\|_{C^{2+\alpha,1+\frac{\alpha}{2}}(Q)}\leq C\|m_0\|_{ C^{2+\alpha}(\mathbb{T}^n)}. 
		\end{equation}
		Furthermore, the solution $(u,m)$ is unique within the class
		\begin{equation}\label{eq:nn5}
			\{ (u,m)\in  C^{2+\alpha,1+\frac{\alpha}{2}}(Q)^2 : \|(u,m)\|_{ C^{2+\alpha,1+\frac{\alpha}{2}}(Q)^2}\leq C\delta \},
		\end{equation}
		where
		\begin{equation}\label{eq:nn6} 
			\|(u,m)\|_{ C^{2+\alpha,1+\frac{\alpha}{2}}(Q)^2}:= \|u\|_{C^{2+\alpha,1+\frac{\alpha}{2}}(Q)}+ \|m\|_{C^{2+\alpha,1+\frac{\alpha}{2}}(Q)},
		\end{equation}
		and it depends holomorphically on $m_0\in C^{2+\alpha}(\mathbb{T}^n)$.

	\end{lem}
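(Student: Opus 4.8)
The plan is to reproduce, essentially verbatim, the argument used for Theorem~\ref{local_wellpose}, with the quadratic Hamiltonian $\tfrac12|\nabla u|^2$ replaced by a general $H\in\mathcal I$. Set
\[
X_1:=C^{2+\alpha}(\mathbb{T}^n),\qquad X_2:=C^{2+\alpha,1+\frac{\alpha}{2}}(Q)\times C^{2+\alpha,1+\frac{\alpha}{2}}(Q),
\]
\[
X_3:=C^{2+\alpha}(\mathbb{T}^n)\times C^{2+\alpha}(\mathbb{T}^n)\times C^{\alpha,\frac{\alpha}{2}}(Q)\times C^{\alpha,\frac{\alpha}{2}}(Q),
\]
and define $\mathscr{K}:X_1\times X_2\to X_3$ by
\begin{align*}
\mathscr{K}(m_0,\tilde u,\tilde m):=\big(&\tilde u(\cdot,T)-G(\cdot,\tilde m(\cdot,T)),\ \tilde m(\cdot,0)-m_0,\ -\p_t\tilde u-\Delta\tilde u+H(x,\nabla\tilde u)-F(x,\tilde m),\\
&\p_t\tilde m-\Delta\tilde m-{\rm div}\big(\tilde m\,H_p(x,\nabla\tilde u)\big)\big).
\end{align*}
Since $H(x,0)=F(x,0)=G(x,0)=0$ we have $\mathscr{K}(0,0,0)=0$, so the lemma reduces to solving $\mathscr{K}(m_0,u,m)=0$ for $m_0$ small via the holomorphic implicit function theorem around the origin.

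Next I would check that $\mathscr{K}$ is well defined and holomorphic on a neighbourhood of $(0,0,0)$. As $C^{\alpha,\frac{\alpha}{2}}(Q)$ is a Banach algebra and $\nabla\tilde u\in C^{1+\alpha,\frac{1+\alpha}{2}}(Q)^n$, each monomial $H^{(\beta)}(x)(\nabla\tilde u)^{\beta}$ lies in $C^{\alpha,\frac{\alpha}{2}}(Q)$; the multivariate Cauchy estimate
\[
\|H^{(\beta)}\|_{C^{2+\alpha}(\mathbb{T}^n)}\le\frac{\beta!}{R^{|\beta|}}\sup_{|z_1|=\cdots=|z_n|=R}\|H(\cdot,z)\|_{C^{2+\alpha}(\mathbb{T}^n)},\qquad R>0,
\]
together with the bookkeeping of \eqref{eq:F1}--\eqref{eq:F2}, shows that the series \eqref{eq:sss1} for $H(x,\nabla\tilde u)$ converges in $C^{\alpha,\frac{\alpha}{2}}(Q)$ once $\|\tilde u\|_{X_2}$ is small; the same applies to $H_p(x,\nabla\tilde u)$, and since the Hölder space is an algebra, also to ${\rm div}\big(\tilde m\,H_p(x,\nabla\tilde u)\big)$. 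Combined with $F(x,\tilde m)\in C^{\alpha,\frac{\alpha}{2}}(Q)$ and $G(x,\tilde m(\cdot,T))\in C^{2+\alpha}(\mathbb{T}^n)$, handled exactly as in Theorem~\ref{local_wellpose}, this gives well-definedness. Since $\mathscr{K}$ is locally bounded, holomorphy follows from weak holomorphy along complex lines, which is immediate from $F,G\in\mathcal B$ and $H\in\mathcal I$. I expect this step to be the main obstacle: one must upgrade the one-variable Cauchy-estimate argument of \eqref{eq:F1}--\eqref{eq:F2} to the composition of the \emph{multivariable} holomorphic functions $H$ and $H_p$ with the $n$-tuple $\nabla\tilde u$, and keep track of the extra derivative hidden in the term ${\rm div}\big(\tilde m\,H_p(x,\nabla\tilde u)\big)$.

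For the linearization, note that the solution at $m_0=0$ is $(0,0)$ and that the power series of $H$ starts at order one, so a direct computation yields
\begin{align*}
D_{(\tilde u,\tilde m)}\mathscr{K}(0,0,0)(u,m)=\big(&u(\cdot,T)-G^{(1)}m(\cdot,T),\ m(\cdot,0),\ -\p_t u-\Delta u+b(x)\cdot\nabla u-F^{(1)}m,\\
&\p_t m-\Delta m-{\rm div}\big(m\,b(x)\big)\big),
\end{align*}
with $b(x):=\nabla_pH(x,0)\in C^{2+\alpha}(\mathbb{T}^n)^n$. The $m$-component decouples from $u$: the problem $\p_t m-\Delta m-{\rm div}(m\,b)=s$, $m(\cdot,0)=r$, is a linear parabolic equation with fixed smooth drift and zeroth-order coefficient, hence uniquely solvable in $C^{2+\alpha,1+\frac{\alpha}{2}}(Q)$ by Lemma~\ref{linear app unique} and its standard forward-in-time counterpart; with $m$ known, the backward equation for $u$ is again uniquely solvable. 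Injectivity and surjectivity then show that $D_{(\tilde u,\tilde m)}\mathscr{K}(0,0,0)$ is a bounded linear isomorphism $X_2\to X_3$. The holomorphic implicit function theorem then produces $\delta>0$ and a unique holomorphic map $S:B_\delta(\mathbb{T}^n)\to X_2$ with $\mathscr{K}(m_0,S(m_0))=0$; setting $(u,m):=S(m_0)$ gives the asserted solution, unique within the ball \eqref{eq:nn5}, and the estimate \eqref{eq:nn4} follows since $S$ is Lipschitz near the origin with $S(0)=(0,0)$. Apart from the composition issue flagged above, every remaining step is a routine transcription of the proof of Theorem~\ref{local_wellpose}.
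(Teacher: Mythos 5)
Your proposal is correct and is exactly the argument the paper intends: the paper omits the proof of this lemma, stating only that it "follows from a similar argument to that of" Theorem~\ref{local_wellpose}, and your writeup is a faithful transcription of that argument with the correct modifications (the multivariate Cauchy estimates for $H$, and the drift $b(x)=\nabla_pH(x,0)=A^{(1)}(x)$ appearing in both linearized equations, matching the paper's own first-order linearization \eqref{H linear l=1 eg}). The points you flag as delicate — convergence of the composed series for $H(x,\nabla\tilde u)$ and $H_p(x,\nabla\tilde u)$ in $C^{\alpha,\frac{\alpha}{2}}(Q)$ and the extra derivative in ${\rm div}\big(\tilde m\,H_p(x,\nabla\tilde u)\big)$ — are handled correctly by the algebra property and the fact that $\nabla^2\tilde u\in C^{\alpha,\frac{\alpha}{2}}(Q)$.
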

	
	The proof of Lemma $\ref{localwellpose2}$ follows from a similar argument to that of Lemma $\ref{local_wellpose}$. We choose to skip it.

	\subsection{Proofs of Theorem $\ref{der F 2}$ and $\ref{der g2}$}\label{proof H}
	
	We first introduce the general heat kernel to recover the unknown functions in a parabolic system. The  construction and basic properties of the general heat kernel can be found in \cite{ito11}.
	
	\begin{lem}\label{general heat ker}
		Let $F_1,F_2,f\in C^{2+\alpha,1+\frac{\alpha}{2}}(Q)$, $g\in C^{2+\alpha}(\mathbb{T}^n)$ and $A(x)\in C^{2+\alpha,1+\frac{\alpha}{2}}(\mathbb{T}^n)^n$. Consider the following system
		\begin{equation}
			\begin{cases}
				\p_tu_i(x,t)-\Delta u_i(x,t)+ A(x)\cdot \nabla u_i= F_i(x)v(x,t)+f(x,t),& \text{ in }\mathbb{T}^n\times (0,T),\medskip\\
				u_i(x,0)=  g(x) , & \text{ in } \mathbb{T}^n .\\
			\end{cases} 
		\end{equation}
		Suppose for any $v(x,t)\in  C^{2+\alpha,1+\frac{\alpha}{2}}(Q)$, we have $u_1(x,T;v)=u_2(x,T;v)$. Then it holds that $F_1=F_2.$
		\begin{proof}
			Let $L=\partial_t-\Delta+A\cdot\nabla(\cdot)$ and $K(x,y,t)$ be the solution of the following Cauchy problem
			\begin{equation*}
				\begin{cases}
					&L (K(x,t))=0,\ \ t>0,\ \ x\in\mathbb{R}^n,\medskip\\
					&K(x,0)=\delta(0).
				\end{cases}
			\end{equation*}
			Then one has that 
			\begin{equation*}
				\begin{aligned}
					u_i(x,t)=&\int_{\mathbb{T}^n}K(x-y,t)g(y)dy\\
					+&\int_{0}^t\int_{\mathbb{T}^n}K(x-y,t-s)(F_i(y)v(y,s)+f(y,s))\, dyds.
				\end{aligned}
			\end{equation*}
			Since we have $u_1(x,T;v)=u_2(x,T;v)$, it follows that
			\begin{equation}\label{implies F1=F2}
				\int_{0}^T\int_{\mathbb{T}^n}K(x-y,T-s)(F_1(y)-F_2(y))v(y,s)\, dyds=0.
			\end{equation}
			By absurdity, we assume that there is $y_0\in \mathbb{T}^n$ such that $ F_1(y_0)\neq F_2(y_0)$. Then there is a neighborhood $U$ of $ y_0 $ such that $F_1-F_2>0$ or $F_1-F_2<0$ in $U$. Since $K(x-y,T-s)>0$  and $\eqref{implies F1=F2}$ holds for all $v\in  C^{2+\alpha,1+\frac{\alpha}{2}}(Q)$. We may choose $v$ such that 
			$K(x-y,T-s)(F_1(y)-F_2(y))v(y,s)>0$ in $U$ and $K(x-y,T-s)(F_1(y)-F_2(y))v(y,s)=0$ in $\mathbb{T}^n\backslash U$. It is a contradiction. Therefore, we have $F_1=F_2.$
			
			The proof is complete. 
		\end{proof}
	\end{lem}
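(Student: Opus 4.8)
The plan is to reduce the claim to the strict positivity of the fundamental solution of the linear parabolic operator $L:=\partial_t-\Delta+A(x)\cdot\nabla$, followed by a soft density argument. First I would invoke classical linear parabolic theory on the torus (see \cite{ito11}): since the coefficients are sufficiently regular and $\mathbb{T}^n$ is compact, $L$ possesses a fundamental solution $K(x,y,t)$, $t>0$, which is smooth, satisfies Gaussian-type bounds, and --- the property we really use --- is strictly positive for all $x,y\in\mathbb{T}^n$ and all $t>0$, by the parabolic strong maximum principle (equivalently, the parabolic Harnack inequality).

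Next I would write the Duhamel representation for the unique classical solution of the Cauchy problem (its solvability in $C^{2+\alpha,1+\frac{\alpha}{2}}(Q)$ being of the type covered by Lemma~\ref{linear app unique}): for $i=1,2$,
\[
u_i(x,t)=\int_{\mathbb{T}^n}K(x,y,t)g(y)\,dy+\int_0^t\!\!\int_{\mathbb{T}^n}K(x,y,t-s)\bigl(F_i(y)v(y,s)+f(y,s)\bigr)\,dy\,ds .
\]
Evaluating at $t=T$ and subtracting the two equations, the terms carrying $g$ and $f$ cancel, so the hypothesis $u_1(\cdot,T;v)=u_2(\cdot,T;v)$ turns into
\[
\int_0^T\!\!\int_{\mathbb{T}^n}K(x,y,T-s)\bigl(F_1(y)-F_2(y)\bigr)v(y,s)\,dy\,ds=0
\]
for every $x\in\mathbb{T}^n$ and every $v\in C^{2+\alpha,1+\frac{\alpha}{2}}(Q)$.

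Finally I would conclude by contradiction. If $F_1(y_0)\neq F_2(y_0)$ for some $y_0$, then by continuity $F_1-F_2$ has a constant sign on a ball $U$ around $y_0$; fixing $x$ and choosing a nonnegative, not identically zero test function $v$ supported in $U\times(0,T)$, the integrand above has a fixed sign and is strictly positive on a set of positive measure (because $K>0$ there), so the integral cannot vanish --- a contradiction, hence $F_1\equiv F_2$. Equivalently, for each fixed $x$ the identity exhibits a bounded linear functional on $L^1(Q)$ vanishing on the dense subspace $C^{2+\alpha,1+\frac{\alpha}{2}}(Q)$, so $K(x,y,T-s)(F_1(y)-F_2(y))=0$ for a.e.\ $(y,s)$, and positivity of $K$ again forces $F_1=F_2$.

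The only genuinely delicate ingredient is the input from linear parabolic theory --- existence, smoothness, and above all strict positivity of the two-point kernel $K(x,y,t)$ for the variable-coefficient operator $L$ on $\mathbb{T}^n$ --- and once that is available the remainder is routine. A small point worth flagging is that, since $A$ need not be constant, $K$ is genuinely a two-point kernel $K(x,y,t)$ rather than a convolution kernel $K(x-y,t)$, so the Fourier-series device used in the pure-heat (quadratic Hamiltonian) case is here replaced by the positivity/sign argument.
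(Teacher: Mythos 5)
Your proposal is correct and follows essentially the same route as the paper: Duhamel representation via the fundamental solution of $\partial_t-\Delta+A\cdot\nabla$, cancellation of the $g$ and $f$ terms, and then a contradiction argument using strict positivity of the kernel together with a sign-definite choice of $v$ supported where $F_1-F_2$ does not vanish. Your remark that $K$ is genuinely a two-point kernel $K(x,y,t)$ rather than a convolution kernel is in fact a point on which your write-up is more careful than the paper's, which writes $K(x-y,t)$ despite the variable coefficient $A(x)$.
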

	
	Before we present the proofs for Theorems $\ref{der F 2}$ and $\ref{der g2}$, we first perform the higher order
	linearization for the MFG system $\eqref{general H}$, which follows a similar strategy to that developed in Section $\ref{HLM}$.
	Let 
	$$m_0(x;\varepsilon)=\sum_{l=1}^{N}\varepsilon_lf_l,$$
	where $f_l\in C_+^{2+\alpha}(\mathbb{T}^n)$ and $\varepsilon=(\varepsilon_1,\varepsilon_2,...,\varepsilon_N)\in\mathbb{R}_+^N$ with 
	$|\varepsilon|=\sum_{l=1}^{N}|\varepsilon_l|$ small enough. Then by Lemma $\ref{localwellpose2}$, there exists a unique solution $(u(x,t;\varepsilon),m(x,t;\varepsilon) )$ of $\eqref{general H}$. Let $(u(x,t;0),m(x,t;0) ) $ be the solution of $\eqref{general H}$ when $\varepsilon=0.$
	Notice that if  $H\in\mathcal{I}, $ then $(u(x,t;0),m(x,t;0) ) =(0,0).$

	Let
	$$u^{(1)}:=\p_{\varepsilon_1}u|_{\varepsilon=0},$$
	$$m^{(1)}:=\p_{\varepsilon_1}m|_{\varepsilon=0}.$$
	Suppose $H\in\mathcal{I}$, $F\in\mathcal{A}$ and $G\in\mathcal{B}, $ we have
	\begin{equation}\label{compute for H1}
		\begin{aligned}
			&\p_t m^{(1)}_j(x,t)-\Delta m^{(1)}_j(x,t)\\
			=&\lim\limits_{\varepsilon\to 0}\frac{1}{\varepsilon_l} [ -H(x,\nabla u(x,t;\varepsilon)) +H(x;u(x,t;0))+ F(x,u(x,t;\varepsilon))-F(x;u(x,t:0))    ]\\
			=&\lim\limits_{\varepsilon\to 0}\frac{1}{\varepsilon_l} [ \sum_{|\beta|=1}^{\infty} H^{(\beta)}(x)\frac{z^{\beta}}{k!}]+F^{(1)}(x)m_j^{(1)}(x,t)\\
			=& -A^{(1)}(x)\cdot \nabla u+F^{(1)}(x)m_j^{(1)}(x,t),
		\end{aligned}
	\end{equation}
	where $A^{(1)}(x)=(H^{(1,0,0,...,0)}(x),H^{(0,1,0,...,0)}(x),...,H^{(0,0,...,1)}(x) ).$

	Moreover, we have
	\begin{equation}\label{compute for H2}
		\begin{aligned}
			&\p_{\varepsilon_1}   {\rm div}(m(x,t) H_p (x,\nabla u))     |_{\varepsilon=0}\\
			=&\p_{\varepsilon_1} {\rm div} (  m(x,t)  A^{(1)}(x) )+ \p_{\varepsilon_1}  {\rm div}(m(x,t)   B^{(1)}(x) \cdot \nabla u)|_{\varepsilon=0}\\
			=&\p_{\varepsilon_1} {\rm div} (  m(x,t)  A^{(1)}(x) ),
		\end{aligned}
	\end{equation}
	where 
	\[ 
	\begin{split}
		B^{(1)}(x)=&(\sum_{|\beta|=1}H^{(1,\beta)}(x),\sum_{|\alpha|+|\beta|=1,\alpha\in\mathbb{R}}H^{(\alpha,1,\beta)}(x),\\
		&\sum_{|\alpha|+|\beta|=1,\alpha\in\mathbb{R}^2}H^{(\alpha,1,\beta)}(x),.....,\sum_{|\alpha|=1,\alpha\in\mathbb{R}^{n-1}}H^{(\alpha,1)}(x)   ).
	\end{split}
	\]
	
	Hence, we can see that $(u^{(1)},m^{(1)} )$ satisfies the following system:
	\begin{equation}\label{H linear l=1 eg}
		\begin{cases}
			-\p_tu^{(1)}(x,t)-\Delta u^{(1)}(x,t)+ A^{(1)}(x)\cdot \nabla u= F^{(1)}(x)m^{(1)}(x,t),& \text{ in }\mathbb{T}^n\times (0,T),\medskip\\
			\p_t m^{(1)}(x,t)-\Delta m^{(1)}(x,t)-{\rm div} ( m^{(1)}(x,t) A^{(1)}(x))=0,&\text{ in }\mathbb{T}^n\times (0,T),\medskip\\
			u^{(1)}(x,T)=G^{(1)}(x)m^{(1)}(x,T), & \text{ in } \mathbb{T}^n,\medskip\\
			m^{(1)}(x,0)=f_1(x), & \text{ in } \mathbb{T}^n,
		\end{cases}  	
	\end{equation}
	
	Here, we make a key observation that the non-linear terms and source terms in higher-order
	linearization system only depend on the solutions of the lower-order
	linearization system. Hence, as an illustrative case for our argument, we only compute the second order linearization system.
	Let
	$$u^{(1,2)}:=\p_{\varepsilon_1}\p_{\varepsilon_2}u|_{\varepsilon=0},
	m^{(1,2)}:=\p_{\varepsilon_1}\p_{\varepsilon_2}m|_{\varepsilon=0},$$
	and
	$$u^{(2)}:=\p_{\varepsilon_2}u|_{\varepsilon=0},m^{(2)}:=\p_{\varepsilon_2}m|_{\varepsilon=0}.$$
	Recall the derivation of the system $\eqref{linear l=1,2 eg}$ in Section $\ref{HLM}$. By direct calculations, we have
	\begin{equation}\label{compute H 12 eg}
		\begin{aligned}
			&-\p_tu^{(1,2)}-\Delta u^{(1,2)}\\
			=&-\p_{\varepsilon_1}\p_{\varepsilon_2}H(x,\nabla u)|_{\varepsilon=0}+F^{(1)}(x)m^{(1,2)}+F^{(2)}(x)m^{(1)}m^{(2)}\\
			=&-\p_{\varepsilon_1}\p_{\varepsilon_2}(\sum_{|\beta|=1}^{2} H^{(\beta)}(x)\frac{z^{\beta}}{k!})|_{\varepsilon=0}+F^{(1)}(x)m^{(1,2)}+F^{(2)}(x)m^{(1)}m^{(2)}\\
			=&-A^{(1)}\cdot\nabla u_j^{(1,2)}-\sum_{|\beta|=2}H^{(\beta)}(x)u_j^{(1)}u_j^{(2)}++F^{(1)}(x)m^{(1,2)}+F^{(2)}(x)m^{(1)}m^{(2)}.
		\end{aligned}
	\end{equation}
	
	Now, with the discussion above at hand and Lemma $\ref{general heat ker}$, we are now in a position to present the proofs of Theorems $\ref{der F 2}$ and $\ref{der g2}.$
	
	\begin{proof}[Proof of Theorem $\ref{der F 2}$]
		Consider the following MFG systems for $j=1,2$:
		\begin{equation}\label{general H for F}
			\begin{cases}
				-\p_tu_j(x,t)-\Delta u_j(x,t)+ H(x,\nabla u_j)= F_j(x,m(x,t)),& \text{ in }\mathbb{T}^n\times (0,T),\medskip\\
				\p_t m_j(x,t)-\Delta m_j(x,t)-{\rm div} (m_j(x,t) H_p (x,\nabla u_j))=0,&\text{ in }\mathbb{T}^n\times (0,T),\medskip\\
				u_j(x,T)=G(x,m_T), & \text{ in } \mathbb{T}^n,\medskip\\
				m_j(x,0)=m_0(x), & \text{ in } \mathbb{T}^n.
			\end{cases}  	
		\end{equation}	
		Recall the higher order linearization method in Section $\ref{HLM}$.
		Let
		$$u_{j}^{(1)}:=\p_{\varepsilon_1}u_{j}|_{\varepsilon=0},$$
		$$m_{j}^{(1)}:=\p_{\varepsilon_1}m_{j}|_{\varepsilon=0}.$$
		By combining $\eqref{compute for H1}$, $\eqref{compute for H2}$ and $\eqref{H linear l=1 eg}$, we can deduce that 
		\begin{equation}\label{H linear l=1}
			\begin{cases}
				-\p_tu_j^{(1)}(x,t)-\Delta u^{(1)}_j(x,t)+ A^{(1)}(x)\cdot \nabla u_j= F^{(1)}_j(x)m_j^{(1)}(x,t),& \text{ in }\mathbb{T}^n\times (0,T),\medskip\\
				\p_t m^{(1)}_j(x,t)-\Delta m^{(1)}_j(x,t)-{\rm div} ( m_j^{(1)}(x,t) A^{(1)}(x))=0,&\text{ in }\mathbb{T}^n\times (0,T),\medskip\\
				u^{(1)}_j(x,T)=G^{(1)}(x)m^{(1)}_j(x,T), & \text{ in } \mathbb{T}^n,\medskip\\
				m^{(1)}_j(x,0)=f_1(x), & \text{ in } \mathbb{T}^n,\medskip\\
			\end{cases}  	
		\end{equation}
		where 
		\[
		A^{(1)}(x)=(H^{(1,0,0,...,0)}(x),H^{(0,1,0,...,0)}(x),...,H^{(0,0,...,1)}(x) ).
		\]

		We extend $f_l$ from $\mathbb{T}^n$ to $\mathbb{R}^n$ periodically, and still denote it by $f_l$. By Lemma $\ref{linearapp wellpose}$, 	$m_j^{(1)}$ is unique determined by $f_1(x)$. We use change of variables as well as a similar strategy in the proof of Lemma $\ref{general heat ker}$. 
		
		Suppose $F^{(1)}_1(x)\not\equiv F^{(2)}_1(x)$, then there is a open subset $U\subset\mathbb{T}^n$ such that $F_1^{(1)}(x)\neq F_1^{(2)}(x) $ in $U.$ Given $\epsilon>0$, there exists $f_1\in C_+^{2+\alpha}(\mathbb{T}^n)$ such that $\|f_l-\chi_U\|_{L^2(\mathbb{T}^n)}\leq\epsilon $, where $\chi_U$ is the characteristic function of $U$.
		Then the classical prior estimate implies that
		$$\|m_1^{(1)}(x,t)-\chi_{U\times(0,T)}\|_{L^2(Q) }\leq C\epsilon, $$
		for some constant $C>0$ 		
		
		This implies that
		\begin{equation}
			\int_{0}^{T}\int_{\mathbb{T}^n}K(x-y,T-s)(F^{(1)}_1(y)-F^{(1)}_2(y))\chi_U(y,s)\, dyds= 0.
		\end{equation}
		Since $K>0$ in $Q$ , it is a contradiction.
		Hence,
		$F_1^{(1)}(x)=F_2^{(1)}(x).$
		
		Next, we can consider the case $N=2.$ Let
		$$u_{j}^{(1,2)}:=\p_{\varepsilon_1}\p_{\varepsilon_2}u_{j}|_{\varepsilon=0},\quad
		m_{j}^{(1,2)}:=\p_{\varepsilon_1}\p_{\varepsilon_2}m_{j}|_{\varepsilon=0},$$
		and
		$$u_{j}^{(2)}:=\p_{\varepsilon_2}u_{j}|_{\varepsilon=0},\quad m_{j}^{(2)}:=\p_{\varepsilon_2}m_{j}|_{\varepsilon=0}.$$
		We can conduct the second-order linearization. Following a similar process as that in $\eqref{compute H 12 eg}$, we can deduce that
		\begin{equation}
			\begin{cases}
				-\p_tu_j^{(1,2)}-\Delta u^{(1,2)}_j+A^{(1)}\cdot\nabla u_j^{(1,2)}+R_1(x,t)\\
				\hspace*{3cm}= F_j^{(1)}(x)m_j^{(1,2)}+F^{(2)}_j(x)m_j^{(1)}m_j^{(2)},& \text{ in }\mathbb{T}^n\times (0,T),\medskip\\
				\p_t m^{(1,2)}_j(x,t)-\Delta m^{(1,2)}_j(x,t)-{\rm div} ( m_j^{(1)}(x,t) A^{(1)}(x))\\
				\hspace*{3cm}= R_2(x,t) ,&\text{ in }\mathbb{T}^n\times (0,T),\medskip\\
				u^{(1,2)}_j(x,T)=G^{(2)}(x)m^{(1,2)}_j(x,T), & \text{ in } \mathbb{T}^n,\medskip\\
				m^{(1,2)}_j(x,0)=0, & \text{ in } \mathbb{T}^n.
			\end{cases}  	
		\end{equation}
		where $$R_1(x,t)= \sum_{|\beta|=2}H^{(\beta)}(x)u_j^{(1)}u_j^{(2)}, $$ and 
		$$R_2(x,t)={\rm div}(m_j^{(1)} U^{(2)})+{\rm div}(m_j^{(2)} U^{(1)}).$$
		Here, the $l$-th component of $U^{(1)}$ is 
		$$U_l^{1}=\sum_{i=1}^{n}\frac{\p^2H}{\p z_l\p z_i}(x,0)\frac{\p u_j^{(2)}}{\p x_l},$$ 
		and the $l$-th component of $U^{(2)}$ is 
		$$U_l^{1}=\sum_{i=1}^{n}\frac{\p^2H}{\p z_l\p z_i}(x,0)\frac{\p u_j^{(1)}}{\p x_l}.$$

		Following a similar argument to the case $N=1$ (considering $m_0=\varepsilon_2f_2$ ), we have 
		$$u^{(1)}_1(x,t)= u^{(1)}_2(x,t),\quad  u^{(2)}_1(x,t)=u^{(2)}_2(x,t),$$
		and
		$$ m^{(1)}_1(x,t)= m^{(1)}_2(x,t),\quad  m^{(2)}_1(x,t)= m^{(2)}_2(x,t).$$
		By Lemma $\ref{linearapp wellpose}$, $m_j^{(1,2)}$ is unique determined by $f_1(x),f_2(x)$ and $G^{(1)}(x)$. By a similar argument, we readily have $F_1^{(2)}(x)=F_2^{(2)}(x).$
		
		Finally, by a mathematical induction, we can show the same result holds for $N\geq 3$. That is, for any $k\in\mathbb{N},$ we have $F^{(k)}_1(x)=F^{(k)}_2(x).$ Therefore, we have $F_1(x,z)=F_2(x,z).$
		
		The proof is complete. 
	\end{proof}
	
	We proceed with the proof of Theorem $\ref{der g2}$. To that end, we first state an auxiliary lemma, which is an analogue to Lemma $\ref{general heat ker}$, and omit its proof.
	
	\begin{lem}\label{general heat ker2}
		Let $g_1,g_2\in C^{2+\alpha}(\mathbb{T}^n)$ and $A(x)\in C^{2+\alpha}(\mathbb{T}^n)^n$. Consider the following systems with $f\in C^{2+\alpha,1+\frac{\alpha}{2}}(Q)$ and $j=1,2$:
		\begin{equation}
			\begin{cases}
				\p_tu_j(x,t)-\Delta u_j(x,t)+ A(x)\cdot \nabla u_j= f(x,t),& \text{ in } \mathbb{T}^n\times(0,T),\medskip\\
				u_j(x,0)=  g_j(x)v(x,T) , & \text{ in } \mathbb{T}^n .
			\end{cases} 
		\end{equation}
		Suppose for any $v\in  C^{2+\alpha,1+\frac{\alpha}{2}}(Q)$, we have $u_1(x,T;v)=u_2(x,T;v)$. Then it holds that $g_1(x)=g_2(x).$
	\end{lem}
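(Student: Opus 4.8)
The plan is to reproduce the argument of Lemma~\ref{general heat ker} almost verbatim, interchanging the roles of the interior source and the initial datum. Let $K(x,y,t)$ denote the fundamental solution on $\mathbb{T}^n$ of the operator $L:=\p_t-\Delta+A(x)\cdot\nabla$; its existence, regularity, and strict positivity ($K(x,y,t)>0$ for $t>0$) are among the properties of the general heat kernel recorded in \cite{ito11}. By Duhamel's principle, the solution of the $j$-th system is represented as
\begin{equation*}
u_j(x,t)=\int_{\mathbb{T}^n}K(x,y,t)\,g_j(y)\,v(y,T)\,dy+\int_0^t\int_{\mathbb{T}^n}K(x,y,t-s)\,f(y,s)\,dy\,ds .
\end{equation*}
The Duhamel term does not depend on $j$, so evaluating at $t=T$ and subtracting gives
\begin{equation*}
u_1(x,T;v)-u_2(x,T;v)=\int_{\mathbb{T}^n}K(x,y,T)\big(g_1(y)-g_2(y)\big)\,v(y,T)\,dy .
\end{equation*}

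Next I would exploit the hypothesis that the left-hand side vanishes for every $v\in C^{2+\alpha,1+\frac{\alpha}{2}}(Q)$. Observe that every $\phi\in C^{2+\alpha}(\mathbb{T}^n)$ arises as the terminal trace $v(\cdot,T)$ of the time-independent function $v(x,t)\equiv\phi(x)$, which belongs to $C^{2+\alpha,1+\frac{\alpha}{2}}(Q)$; hence the identity above reduces to
\begin{equation*}
\int_{\mathbb{T}^n}K(x,y,T)\big(g_1(y)-g_2(y)\big)\,\phi(y)\,dy=0\qquad\text{for all }\phi\in C^{2+\alpha}(\mathbb{T}^n)\text{ and all }x\in\mathbb{T}^n .
\end{equation*}
I would then conclude by contradiction exactly as in Lemma~\ref{general heat ker}: if $g_1(y_0)\neq g_2(y_0)$ for some $y_0$, then by continuity $g_1-g_2$ keeps a fixed nonzero sign on a neighbourhood $U$ of $y_0$, and choosing a nonnegative $\phi$ supported in $U$ with $\phi(y_0)>0$ makes the integrand $K(x,y,T)(g_1(y)-g_2(y))\phi(y)$ of one sign and not identically zero — here the strict positivity of $K(x,\cdot,T)$ is essential — contradicting that the integral vanishes. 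Therefore $g_1\equiv g_2$.

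The only non-routine ingredient is the cited construction of the heat kernel $K$ for the drift-perturbed heat operator on the torus, in particular its strict positivity and the validity of Duhamel's formula at the stated regularity; this is the point I would flag as the ``hard part'', though it is classical (see \cite{ito11}). If one prefers to avoid pointwise kernel positivity, an alternative is to test the vanishing identity against the Fourier basis $\{e^{2\pi\mathrm{i}\boldsymbol{\xi}\cdot x}\}_{\boldsymbol{\xi}\in\mathbb{Z}^n}$ and argue by density as in Lemma~\ref{dense}, but the heat-kernel route keeps the proof strictly parallel to that of Lemma~\ref{general heat ker}.
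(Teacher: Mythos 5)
Your proof is correct and is exactly the argument the paper has in mind: the paper omits the proof of this lemma, stating only that it is analogous to Lemma \ref{general heat ker}, and your adaptation — kernel representation via Duhamel, cancellation of the $f$-term, realizing arbitrary $\phi\in C^{2+\alpha}(\mathbb{T}^n)$ as $v(\cdot,T)$, and the sign/positivity argument for $K$ — is precisely that analogue. No gaps.
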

	
	Next, we give the proof Theorem $\ref{der g2}.$
	
	\begin{proof}[Proof of Theorem $\ref{der g2} $]
		We shall follow a similar strategy that was developed for the proof of Theorem $\ref{der F 2}$. 
		Consider the following systems for $j=1,2$: 
		\begin{equation}\label{general H for g}
			\begin{cases}
				-\p_tu_j(x,t)-\Delta u_j(x,t)+ H(x,\nabla u_j)= F(x,m(x,t)),& \text{ in }\mathbb{T}^n\times (0,T),\medskip\\
				\p_t m_j(x,t)-\Delta m_j(x,t)-{\rm div} (m_j(x,t) H_p (x,\nabla u_j))=0,&\text{ in }\mathbb{T}^n\times (0,T),\medskip\\
				u_j(x,T)=G_j(x,m_T), & \text{ in } \mathbb{T}^n,\medskip\\
				m_j(x,0)=m_0(x), & \text{ in } \mathbb{T}^n.\\
			\end{cases}  	
		\end{equation}	
		We next perform the successive linearization process. Consider the case $N=1.$ Let
		$$u_{j}^{(1)}:=\p_{\varepsilon_1}u_{j}|_{\varepsilon=0},$$
		$$m_{j}^{(1)}:=\p_{\varepsilon_1}m_{j}|_{\varepsilon=0}.$$
		By direct computations, one can show that $(u_{j}^{(1)},v_{j}^{(1)} )$ satisfies the following system:
		\begin{equation}
			\begin{cases}
				-\p_tu_j^{(1)}(x,t)-\Delta u^{(1)}_j(x,t)+ A^{(1)}(x)\cdot \nabla u_j= F^{(1)}(x)m_j^{(1)}(x,t),& \text{ in }\mathbb{T}^n\times (0,T),\medskip\\
				\p_t m^{(1)}_j(x,t)-\Delta m^{(1)}_j(x,t)-{\rm div} ( m_j^{(1)}(x,t) A^{(1)}(x))=0,&\text{ in }\mathbb{T}^n\times (0,T),\medskip\\
				u^{(1)}_j(x,T)=G_j^{(1)}(x)m^{(1)}_j(x,T), & \text{ in } \mathbb{T}^n,\medskip\\
				m^{(1)}_j(x,0)=f_1(x), & \text{ in } \mathbb{T}^n.
			\end{cases}  	
		\end{equation}
		We can solve this system by first deriving $m^{(1)}_j$ and then obtaining $u^{(1)}_j.$ 
		
		Since  $\mathcal{M}_{G_1}=\mathcal{M}_{G_2}$, we have $$ u_1^{(1)}(x,0)=u_2^{(1)}(x,0),$$ for all $f_1\in C_+^{2+\alpha}(\mathbb{T}^n).$ By Lemma $\ref{general heat ker2}$, we readily see that $ G_1^{(1)}(x)=G_2^{(2)}(x).$
		
		Finally, by following a similar argument in the proof of Theorem~\ref{der F 2}, we can conduct the higher-order linearization process to show that $G_1^{(k)}(x)=G_2^{(k)}(x)$ for all $k\in\mathbb{N}$. Hence, $G_1(x,z)=G_2(x,z).$
		
		The proof is complete. 
	\end{proof}
	
	\section*{Acknowledgment}
	The work of H Liu was supported by Hong Kong RGC General Research Funds (project numbers, 11300821, 12301420 and 12302919) and the NSFC/RGC Joint Research Grant (project number, N\_CityU101/21). 
	\vskip0.5cm

\end{document}